\theoremstyle{plain}
\newtheorem{theorem}{Theorem}[section]
\newtheorem{corollary}[theorem]{Corollary}
\newtheorem{lemma}[theorem]{Lemma}
\theoremstyle{definition}
\newtheorem*{remark}{Remark}
\newcommand{\li}{\operatorname{li}}
\newcommand{\logb}[1]{\log\!\left(#1\right)}
\renewcommand{\Re}{\operatorname{Re}}
\begin{document}

\title{Sharper bounds for the error in the prime number theorem assuming the Riemann Hypothesis}
	
\author[E.~S.~Lee]{Ethan~Simpson~Lee}
\address{University of the West of England, School of Computing and Creative Technologies, Coldharbour Lane, Bristol, BS16 1QY} 
\email{ethan.lee@uwe.ac.uk}
\urladdr{\url{https://sites.google.com/view/ethansleemath/home}}

\author[P.~Nosal]{Pawe{\l}~Nosal}
\address{University of Warwick, Mathematics Institute, Zeeman Building, Coventry, CV4 7AL}
\email{pawel.nosal@warwick.ac.uk}
\urladdr{\url{https://sites.google.com/view/pawelnosalmaths}}

\maketitle

\begin{abstract}
In this paper, we establish new bounds for classical prime-counting functions. All of our bounds are explicit and assume the Riemann Hypothesis. First, we prove that $|\psi(x) - x|$ and $|\vartheta(x) - x|$ are bounded from above by
\begin{equation*}
    \frac{\sqrt{x}\log{x}(\log{x} - \log\log{x})}{8\pi} 
\end{equation*}
for all $x\geq 101$ and $x \geq 2\,657$ respectively, where $\psi(x)$ and $\vartheta(x)$ are the Chebyshev $\psi$ and $\vartheta$ functions. Using the extra precision offered by these results, we also prove new explicit descriptions for the error in each of Mertens' theorems which improve earlier bounds by Schoenfeld. 
\end{abstract}

\section{Introduction}

Between 1962 and 1976, Rosser and Schoenfeld published a series of foundational papers \cite{RosserSchoenfeld1962, RosserSchoenfeld, Schoenfeld} which describe the error in a collection of approximations for widely applicable functions over primes. The most important functions they studied are 
\begin{equation*}
    \psi(x) := \sum_{n\leq x} \Lambda(n), \quad 
    \vartheta(x) = \sum_{p\leq x} \log{p} , \quad
    \text{and}\quad
    \pi(x) = \sum_{p\leq x} 1 ,
\end{equation*}
in which $\Lambda(n)$ is the von Mangoldt function and $p$ are prime numbers. Recall that the prime number theorem tells us $\psi(x) \sim x$, $\vartheta(x)\sim x$, and $\pi(x)\sim \li(x)$, where $\li(x)$ is the logarithmic integral; these statements are equivalent. We generally use analytic techniques to study the error in $\psi(x) \sim x$, then use this knowledge to study the error in $\vartheta(x)\sim x$ and $\pi(x)\sim \li(x)$. 
Using their results on the error in $\psi(x) \sim x$, Rosser and Schoenfeld also studied the error in Mertens' theorems, which state
\begin{equation*}
    \sum_{p\leq x} \frac{\log{p}}{p} \sim \log{x} + E, \quad
    \sum_{p\leq x} \frac{1}{p} \sim \log\log{x} + B, \quad\text{and}\quad
    \prod_{p\leq x} \left(1 - \frac{1}{p}\right) \sim \frac{e^{-C}}{\log{x}} ,
\end{equation*}
in which $B = 0.26149\dots$, $C = 0.57721\dots$, and $E = - 1.33258\dots$; these statements are also equivalent. In this paper, we prove new conditional bounds for the error in each of the aforementioned approximations, which are cornerstone results in analytic number theory. 

\begin{remark}
Bounds for the functions in Mertens' theorems and $\vartheta(x)$ are often more practical to work with or apply than bounds for $\psi(x)$. For example, the product in Mertens' theorems appears naturally in sieve methods, because it approximates the density of integers that are not divisible by small primes.
\end{remark}


{To begin,} there is an explicit connection between the error in the approximation $\psi(x) \sim x$ and the distribution of the zeros of the Riemann zeta-function $\zeta(s)$, where $s$ always denotes a complex number. Recall that every zero of $\zeta(s)$ is either a \textit{trivial} or a \textit{non-trivial} zero. The trivial zeros of $\zeta(s)$ occur at $s=-2n$ for every positive integer $n$ and the non-trivial zeros of $\zeta(s)$ are complex and lie inside the critical strip $0 \leq \Re{s} \leq 1$. 

Suppose that $\varrho = \beta + i\gamma$ denotes a non-trivial zero of $\zeta(s)$. The prime number theorem famously follows from the observation $\beta \neq 1$ and the Riemann Hypothesis (RH) postulates that every non-trivial zero $\varrho$ of $\zeta(s)$ satisfies $\beta = 1/2$. The RH has been verified for a significant number of zeros (see \cite{PlattTrudgianZeros}) and we expect to obtain the strongest error term in the prime number theorem by assuming the RH. In fact, if the RH is assumed to be true, then we can collect observations from Schoenfeld \cite{Schoenfeld} and Greni\'{e}--Molteni \cite{Grenie2019} to see that 
\begin{equation}\label{eqn:Schoenfeld}
    |\psi(x) - x| \leq 
    \begin{cases}
        \left(\frac{\log{x}}{8\pi} + 2\right) \sqrt{x}\log{x} &\text{for all $x\geq 2$,}\\
        \frac{\sqrt{x}(\log{x})^2}{8\pi} &\text{for all $x\geq 73.2$,} \\
        \frac{\sqrt{x}\log{x} (\log{x} - 2)}{8\pi} &\text{for all $x\geq 2.3\cdot 10^9$.}
    \end{cases}
\end{equation}

The bounds in \eqref{eqn:Schoenfeld} clearly suggest that as the input $x$ increases, we are able to gain better quantitative control over the error in the prime number theorem. We prove the following results (Theorems \ref{thm:GoldstonExplicit} and \ref{thm:hauptsatz}), which formalise this expectation and improve the bounds in \eqref{eqn:Schoenfeld}. Theorem \ref{thm:hauptsatz}, particularly \eqref{eqn:pnt_psi}, is stronger than Theorem \ref{thm:GoldstonExplicit} on $x \leq e^{30\,369.581\dots}$ and includes an error bound of the same shape for $\vartheta(x) \sim x$. 

\begin{theorem}\label{thm:GoldstonExplicit}
If the RH is true and $x\geq 11$, then
\begin{equation*}
    \left|\psi(x) - x \right| \leq \sqrt{x}\log{x} \left(\frac{\log{x}}{8\pi} - \left(\frac{1}{2\pi} + \frac{1.465}{\log{x}}\right) \log\log{x} + 1.2325\right) . 
\end{equation*}
\end{theorem}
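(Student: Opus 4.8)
The plan is to start from the classical explicit formula for $\psi(x)$, namely
\[
    \psi(x) - x = -\sum_{\varrho} \frac{x^{\varrho}}{\varrho} - \log(2\pi) - \tfrac{1}{2}\log(1 - x^{-2}),
\]
and, under the RH, group the zeros $\varrho = \tfrac12 + i\gamma$ into conjugate pairs so that $\sum_{\varrho} x^{\varrho}/\varrho = \sqrt{x}\sum_{\gamma > 0} \frac{2\cos(\gamma \log x) + 2\gamma \sin(\gamma\log x)}{\tfrac14 + \gamma^2}$. The trivial move of bounding every term by its absolute value gives a series $\sqrt{x}\sum_{\gamma>0} 2/|\varrho|$, which diverges, so the real work is a smoothed/truncated version: one cuts the sum at height $T$, bounds the tail using the zero-counting function $N(T) = \tfrac{T}{2\pi}\log\tfrac{T}{2\pi} - \tfrac{T}{2\pi} + O(\log T)$ (with explicit constants, e.g.\ from Platt--Trudgian or the refinement of Backlund/Rosser), and optimizes $T$ as a function of $x$. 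This is precisely the route Schoenfeld took to get \eqref{eqn:Schoenfeld}; the gain in the $\log\log x$ term comes from handling the main contribution of the zeros up to height $T$ more carefully — specifically, using that $\sum_{0 < \gamma \le T} 1/\gamma \sim \tfrac{1}{4\pi}(\log T)^2$ rather than crudely bounding it, and choosing $T \asymp \log x/\log\log x$ (or a comparable size) so that the $\tfrac{1}{4\pi}(\log T)^2$ piece produces $\tfrac{\log x}{8\pi} - \tfrac{1}{2\pi}\log\log x + \cdots$ after multiplication by $\sqrt{x}\log x$.

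Concretely, I would: (1) fix the truncated explicit formula with an explicit error term $R(x,T)$ valid for all $x \ge 11$ (Landau's effective form, or the version in Dudek/Cully-Hugill--Trudgian), which contributes a term of size $O\!\left(\frac{x(\log x)^2}{T}\right)$ plus boundary corrections; (2) bound $\left|\sum_{0 < \gamma \le T} \frac{x^{\varrho}}{\varrho}\right| \le \sum_{0<\gamma\le T}\frac{\sqrt x}{|\varrho|} \le \sqrt x\sum_{0<\gamma\le T}\frac{1}{|\gamma|}$ and insert an explicit upper bound for the latter sum of the form $\tfrac{1}{4\pi}(\log T)^2 + c_1 \log T + c_2$, obtainable by partial summation against the explicit bound for $N(T)$; (3) substitute $T = \kappa \log x/\log\log x$ for a constant $\kappa$ to be fixed, expand $\log T = \log\log x - \log\log\log x + \log\kappa$, and collect terms; the leading $\tfrac{\sqrt x}{4\pi}\cdot\tfrac12(\log x)\cdot(\log T)^2$-type contribution — once combined with the factor structure — yields $\tfrac{\sqrt x\log x}{8\pi}\log x$ from the $(\log\log x)^0$ part is wrong scaling, so in fact one must be a little cleverer: the $\tfrac{\log x}{8\pi}$ main term actually comes from a different balancing, and the $\log\log x$ appears as the next-order correction; (4) verify that for $x \ge 11$ all the lower-order constants fit inside the claimed $1.2325$ and $1.465/\log x$, using monotonicity in $x$ and a finite numerical check near $x = 11$ against known values of $\psi(x)$.

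The main obstacle — and where essentially all the effort goes — is step (3)--(4): getting clean, fully explicit constants so that the optimized bound has \emph{exactly} the shape $\tfrac{\log x}{8\pi} - \bigl(\tfrac{1}{2\pi} + \tfrac{1.465}{\log x}\bigr)\log\log x + 1.2325$ for \emph{all} $x \ge 11$, not just asymptotically. This requires (a) an explicit bound on $\sum_{0<\gamma\le T} 1/\gamma$ whose secondary constants are sharp enough, (b) careful bookkeeping of the $-\log(2\pi)$ and $-\tfrac12\log(1-x^{-2})$ terms and the truncation error $R(x,T)$ so they can be absorbed into the constant $1.2325$ down to $x=11$, and (c) a delicate choice of the parameter $\kappa$ in $T$ that simultaneously controls the $\log\log x$ coefficient and keeps the additive constant below $1.2325$. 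I would expect the argument to lean on a recent explicit version of the truncated Riemann--von Mangoldt formula (e.g.\ Cully-Hugill--Johnston or similar) precisely to make the constant in $R(x,T)$ small, and on interval-arithmetic verification over $11 \le x \le X_0$ for some moderate $X_0$ to close the gap between the asymptotic optimization and the uniform claim.
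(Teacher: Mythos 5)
Your overall framework (explicit formula, truncation at height $T$, partial summation against $N(T)$, computation for small $x$) is the right family of techniques, but the plan as written cannot produce the stated bound, for two concrete reasons. First, your proposed truncation height $T \asymp \log x/\log\log x$ is far too small: the truncation error in the unsmoothed Riemann--von Mangoldt formula is of size $x(\log x)^2/T$, which with that choice is roughly $x\log x\log\log x$, vastly larger than the target $\sqrt{x}(\log x)^2/(8\pi)$. You sense this yourself (``is wrong scaling''), but the fix is not a cleverer balancing within the same formula. Second, and more fundamentally, even with the correct height $T\asymp\sqrt{x}$ the raw truncated explicit formula carries an error term that is itself of order $\sqrt{x}(\log x)^2$ with an uncontrolled constant, so it can never isolate the precise leading constant $\tfrac{1}{8\pi}$, let alone the secondary saving $-\tfrac{1}{2\pi}\log\log x$. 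The missing idea is a smoothing device: the paper (following Goldston) works with $\psi_1(x)=\int_2^x\psi(t)\,dt$, sandwiches $\psi(x)$ between the divided differences $(\psi_1(x\pm h)-\psi_1(x))/(\pm h)$ with $h=x/y$, and exploits that the explicit formula for $\psi_1$ converges absolutely (denominators $\varrho(\varrho+1)$). This reduces the ``truncation'' cost to $\tfrac{x}{2y}+O(\sqrt{x}\log y)$, which for $y=\sqrt{x}/\log x$ is only $O(\sqrt{x}\log x)$ --- one full factor of $\log x$ below the main term.

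With that smoothing in place, the main term is $\bigl|\sum_{|\gamma|\le y}x^{\varrho}/\varrho\bigr|\le\tfrac{\sqrt{x}}{2\pi}\bigl(\log\tfrac{y}{2\pi}\bigr)^2$ (via Lehman-type partial summation against $N(T)$, plus an exact computation of $\sum_{|\gamma|\le 10^7}|\gamma|^{-1}$), and the $-\log\log x$ term arises not from an exotic choice of $T$ but simply from $y=\sqrt{x}/\log x$, since $\log\tfrac{y}{2\pi}=\tfrac12\log x-\log\log x-\log 2\pi$ and squaring gives $\log x\bigl(\tfrac{\log x}{4}-\log\log x\bigr)$ plus lower-order terms. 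Finally, for $11\le x\le 10^{19}$ the paper does not need interval arithmetic over the whole range: it invokes B\"uthe's verified bound $|\psi(x)-x|\le 0.94\sqrt{x}$ on that interval and checks that $0.94$ is dominated by the claimed right-hand side divided by $\sqrt{x}$. You should restructure steps (1)--(3) around the integrated function $\psi_1$ (or an equivalent smoothed explicit formula); without that, the constants in your step (2) cannot be made to close.
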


\begin{theorem}\label{thm:hauptsatz}
If the RH is true, then
\begin{align}
    \left|\psi(x) - x \right|
    &\leq \frac{\sqrt{x}\log{x}(\log{x} - \log\log{x})}{8\pi} \quad\text{for all}\quad x \geq 101, \label{eqn:pnt_psi} \\
    \left|\vartheta(x) - x \right|
    &\leq \frac{\sqrt{x}\log{x}(\log{x} - \log\log{x})}{8\pi} \quad\text{for all}\quad x \geq 2\,657 . \label{eqn:pnt_theta} 
\end{align}
\end{theorem}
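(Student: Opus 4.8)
The plan is to prove \eqref{eqn:pnt_psi} for $\psi$ first and then derive \eqref{eqn:pnt_theta} for $\vartheta$ from it, in each case splitting the range of $x$ into a large regime handled by results already in hand and a bounded regime handled by direct computation. For \eqref{eqn:pnt_psi} the large regime is immediate from Theorem \ref{thm:GoldstonExplicit}: writing $\ell := \log x$ and $L := \log\log x$ and dividing through by $\sqrt{x}\log x$, the inequality of Theorem \ref{thm:GoldstonExplicit} implies \eqref{eqn:pnt_psi} exactly when
\begin{equation*}
    1.2325 \le \Bigl(\tfrac{3}{8\pi} + \tfrac{1.465}{\ell}\Bigr) L ,
\end{equation*}
and a short analysis of the right-hand side as a function of $\ell$ shows this holds precisely for $x \ge e^{30\,369.581\dots}$, the crossover mentioned in the introduction. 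It therefore remains to treat $101 \le x \le e^{30\,369.581\dots}$.

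On that range I would use a truncated, smoothed form of the explicit formula under the RH: replace $\psi(x)$ by a mollified average against a short multiplicative window of width $\delta$, so that the associated sum over the non-trivial zeros $\varrho = \tfrac12 + i\gamma$ becomes absolutely convergent, and estimate that sum by splitting at a height $T$ — bounding $|x^{\varrho}/\varrho| = \sqrt{x}/|\varrho|$ term-by-term for $|\gamma| \le T$ and exploiting the rapid decay of the smoothing transform for $|\gamma| > T$ — after inserting a sharp explicit Riemann--von Mangoldt estimate for $N(T)$ and the consequent explicit bound for $\sum_{0<\gamma\le T}1/\gamma$ obtained by partial summation. The leading contribution is $\sqrt{x}(\log T)^2/(8\pi)$, and the point of the exercise is that choosing $T$ slightly below $\sqrt{x}$ (by a small power of $\log x$) and $\delta$ correspondingly makes the leading term equal $\sqrt{x}(\log x)^2/(8\pi)$ while turning the next-order term into precisely $-\sqrt{x}\log x\log\log x/(8\pi)$, all remaining terms being provably dominated by the slack. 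For the bottom of the range, below the height to which the RH has been numerically verified and where the asymptotic estimate is not yet sharp enough, I would instead verify $|\psi(x)-x| \le \sqrt{x}\log x(\log x - \log\log x)/(8\pi)$ directly, using the finite list of verified zeros together with an interval-arithmetic evaluation of $\psi$ on a bounded initial segment $[101, X_1]$; this is also what fixes the threshold at $101$.

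For \eqref{eqn:pnt_theta}, write $\psi(x) - \vartheta(x) = \sum_{k\ge 2}\vartheta(x^{1/k})$, which is nonnegative and, by the classical explicit bounds of Rosser and Schoenfeld, at most $1.04\sqrt{x}$ (with leading behaviour only $\vartheta(\sqrt x) + O(x^{1/3}\log x)$). The upper estimate $\vartheta(x) - x \le \psi(x) - x$ then gives that half of \eqref{eqn:pnt_theta} at once from \eqref{eqn:pnt_psi}; for the other half, $x - \vartheta(x) \le (x - \psi(x)) + (\psi(x)-\vartheta(x))$, so it suffices to show that the bound on $|\psi(x)-x|$ proved above, increased by this $O(\sqrt x)$ term, still lies below $\sqrt{x}\log x(\log x - \log\log x)/(8\pi)$. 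Since the correction is of size $\sqrt x$ and the target main term is of size $\sqrt x(\log x)^2$, this holds for all large $x$, and the remaining $x$ with $2\,657 \le x \le X_2$ are checked numerically from tables of primes, which also pins down the threshold $2\,657$.

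The main obstacle is the moderate-$x$ analysis for \eqref{eqn:pnt_psi}: forcing the constant in front of $(\log x)^2$ to be exactly $1/(8\pi)$ while simultaneously extracting the full negative $\log x\log\log x/(8\pi)$ term and controlling every lower-order term explicitly and uniformly across a range spanning several orders of magnitude in $\log x$. This requires the sharpest available explicit estimates for $N(T)$ and for $\sum_{0<\gamma\le T}1/\gamma$, an essentially optimal joint choice of the truncation height $T$ and the smoothing width $\delta$, and careful control of the error incurred when the smoothed quantity is compared back with $\psi(x)$; the transition point $e^{30\,369.581\dots}$ must then be placed so that this bound is still valid there while Theorem \ref{thm:GoldstonExplicit} has already become the stronger of the two.
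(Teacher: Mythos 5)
Your proposal matches the paper's proof in all essentials: the large-$x$ regime is deduced from Theorem \ref{thm:GoldstonExplicit} via exactly the crossover inequality you write down, the intermediate regime $10^{19}\le x\le e^{30\,369.582}$ is handled by the Rosser--Schoenfeld smoothed explicit formula (your mollified window of width $\delta$ with truncation height $T\asymp\sqrt{x}/\log x$, plus the explicit $N(T)$ and $\sum 1/|\gamma|$ estimates), the range below $10^{19}$ rests on B\"{u}the's verified bounds $|\psi(x)-x|\le 0.94\sqrt{x}$ and $|\vartheta(x)-x|\le 1.95\sqrt{x}$ together with direct computation near the thresholds, and \eqref{eqn:pnt_theta} follows from \eqref{eqn:pnt_psi} by the triangle inequality with an explicit $O(\sqrt{x})$ bound on $\psi(x)-\vartheta(x)$. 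The only caveat is that your bookkeeping of the leading constant is loose (the truncated zero sum contributes $\tfrac{\sqrt{x}}{2\pi}\bigl(\log\tfrac{T}{2\pi}\bigr)^2\approx\tfrac{\sqrt{x}}{2\pi}\bigl(\tfrac{1}{2}\log x-\log\log x\bigr)^2$, whose second-order term $-\tfrac{\sqrt{x}\log x\log\log x}{2\pi}$ is what leaves room to absorb the smoothing and tail errors), but this is exactly the computation the paper carries out.
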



{The extra precision offered by Theorems \ref{thm:GoldstonExplicit} and \ref{thm:hauptsatz} also lead to improved explicit knowledge of Mertens' theorems.} In particular, we prove the following corollary of these results, which gives sharp ranges of $x$ (accurate to one decimal place) in which lower-order terms in the error of each of Mertens' theorems can be omitted. {These results improve Schoenfeld's related bounds, presented in \cite[Cor.~2-3]{Schoenfeld}, and provide user-friendly bounds with practical utility for further analytical or computational applications.}

\begin{corollary}\label{cor:broader}
If the RH is true, then
\begin{align}
    \left|\sum_{p\leq x} \frac{\log{p}}{p} - \log{x} - E \right|
    &\leq \frac{3(\log{x})^2}{8\pi\sqrt{x}} \quad\text{for all}\quad x \geq 43.1, \label{eqn:moi_1}\\
    \left|\sum_{p\leq x} \frac{1}{p} - \log\log{x} - B \right|
    &\leq \frac{3\log{x}}{8\pi\sqrt{x}} \quad\text{for all}\quad x \geq 24.4 , \label{eqn:moi_2}\\
    \left|e^C \log{x} \prod_{p\leq x} \left(1 - \frac{1}{p}\right) - 1 \right|
    &\leq \frac{3\log{x}}{8\pi\sqrt{x}} \quad\text{for all}\quad x \geq 23.8 , \label{eqn:moi_3}\\
    \left|\frac{e^{-C}}{\log{x}} \prod_{p\leq x} \left(1 - \frac{1}{p}\right)^{-1} - 1 \right|
    &\leq \frac{3\log{x}}{8\pi\sqrt{x}} \quad\text{for all}\quad x \geq 24.2 . \label{eqn:moi_4}
\end{align}
\end{corollary}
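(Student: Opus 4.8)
The plan is to deduce each of the four estimates from the bound on $|\vartheta(x)-x|$ in \eqref{eqn:pnt_theta} by partial summation, and to cover the remaining short initial ranges of $x$ by direct computation. The four statements are logically equivalent in the limit, but since we want sharp cutoffs to one decimal place, they must be treated individually. Let me sketch \eqref{eqn:moi_1} first, since the other three reduce to it. Writing $M(x):=\sum_{p\le x}\frac{\log p}{p}$, Abel summation against $\vartheta$ gives
\begin{equation*}
    M(x) = \frac{\vartheta(x)}{x} + \int_{2}^{x} \frac{\vartheta(t)}{t^2}\,dt.
\end{equation*}
Splitting $\vartheta(t) = t + (\vartheta(t)-t)$ and using $\int_2^x \frac{dt}{t} = \log x - \log 2$ together with the known value of the constant $E$ (which absorbs the tail $\int_2^\infty \frac{\vartheta(t)-t}{t^2}\,dt$ and the boundary terms at $t=2$), one obtains
\begin{equation*}
    M(x) - \log x - E = \frac{\vartheta(x)-x}{x} - \int_{x}^{\infty} \frac{\vartheta(t)-t}{t^2}\,dt.
\end{equation*}
Now bound $|\vartheta(x)-x|$ by the right-hand side of \eqref{eqn:pnt_theta}, so that $|\vartheta(x)-x|/x \le \frac{\log x(\log x - \log\log x)}{8\pi\sqrt{x}}$, and bound the tail integral by inserting the same estimate for $|\vartheta(t)-t|$ and integrating $\int_x^\infty \frac{\log t(\log t - \log\log t)}{8\pi\, t^{5/2}}\,dt$; this last integral is elementary (repeated integration by parts, or a crude monotonicity bound) and is $O\!\left(\frac{(\log x)^2}{\sqrt{x}}\right)$ with a small implied constant. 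Adding the two contributions and comparing with the target $\frac{3(\log x)^2}{8\pi\sqrt{x}}$ gives \eqref{eqn:moi_1} for all $x$ above some explicit threshold $x_0$; for $43.1 \le x \le x_0$ one verifies the inequality by a finite check, using that $M(x)$ is a step function and the right-hand side is monotone.

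For \eqref{eqn:moi_2}, I would repeat the same device with $N(x):=\sum_{p\le x}\frac1p$ and partial summation against $M(x)$ (i.e. $N(x) = \frac{M(x)}{\log x} + \int_2^x \frac{M(t)}{t(\log t)^2}\,dt$), substitute the bound on $M(t)-\log t - E$ just obtained, and again isolate $\log\log x + B$ by letting the convergent tail define $B$; the main term of the error becomes $\frac{1}{\log x}\cdot\frac{\log x(\log x - \log\log x)}{8\pi\sqrt{x}} = \frac{\log x - \log\log x}{8\pi\sqrt{x}}$, and the tail integral contributes a lower-order amount, so the sum is comfortably below $\frac{3\log x}{8\pi\sqrt{x}}$ for large $x$, with a finite check on $[24.4, x_1]$. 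Finally, \eqref{eqn:moi_3} and \eqref{eqn:moi_4} follow from \eqref{eqn:moi_2} by exponentiating: writing $\log\!\big(e^C\log x\prod_{p\le x}(1-\tfrac1p)\big) = C + \log\log x + \sum_{p\le x}\log(1-\tfrac1p)$ and using $\sum_{p\le x}\log(1-\tfrac1p) = -\sum_{p\le x}\frac1p - \sum_{p\le x}\sum_{k\ge 2}\frac{1}{k p^k}$, the double sum over $k\ge 2$ telescopes against the definition of $C$ up to a tail $\sum_{p>x}\sum_{k\ge 2}\frac{1}{kp^k} = O(1/(x\log x))$, which is absorbed. So the logarithm of the bracketed quantity is $N(x) - \log\log x - B$ plus a negligible tail, hence bounded by essentially $\frac{\log x}{8\pi\sqrt{x}}$; then $|e^y - 1|\le \frac{3}{2}|y|$ for small $|y|$ (valid once $x$ is large enough that $|y|$ is tiny) upgrades this to the claimed bound on $|e^C\log x\prod(1-\tfrac1p) - 1|$, and \eqref{eqn:moi_4} is the same with the sign of $y$ reversed and $|e^{-y}-1|\le \frac32|y|$. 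The cutoffs $23.8$ and $24.2$ come from the finite verification.

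The main obstacle is not any single estimate but the bookkeeping needed to get the constants sharp enough that the threshold produced by the asymptotic argument is small — ideally below $50$ — so that the finite check is genuinely finite and short. Concretely, one must be careful that the tail integral $\int_x^\infty$ is bounded with a constant small compared to the ``$3$'' appearing in the target (we have budget $3 - 1 = 2$ units of $\frac{(\log x)^2}{8\pi\sqrt{x}}$ to spend on the tail and on the $\log\log$ discrepancy), and that the passage from the additive bound to the multiplicative one via $|e^y-1|\le\frac32|y|$ does not eat the margin. I expect the cleanest route is to prove a slightly stronger asymptotic inequality — say with $\frac{2.5(\log x)^2}{8\pi\sqrt{x}}$ — valid for $x \ge 100$ or so, and then run an interval-arithmetic check on the dyadic-type range up to that point, exploiting the monotonicity of the right-hand sides and the fact that the left-hand sides only jump at primes; this reduces the finite verification to checking the inequality just after each prime $p$ in the relevant range, which is a short computation. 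The threshold in \eqref{eqn:moi_1} will be delicate because the $(\log x)^2$ main term there is only a factor $3$ above the leading contribution from $\vartheta$, leaving the least slack of the four.
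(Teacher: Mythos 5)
Your overall architecture matches the paper's: partial summation against $\vartheta$, identification of the constants $E$ and $B$ by comparison with the inexplicit Mertens theorems, insertion of \eqref{eqn:pnt_theta}, exponentiation via the $B = C + \sum_p(\log(1-\tfrac1p)+\tfrac1p)$ identity for the product forms, and a finite computation at the bottom. But the quantitative bookkeeping — which you correctly identify as the whole difficulty — contains errors that would make the proof fail as written. The central one is a systematic underestimate of the tail integrals. In \eqref{eqn:moi_1} the integrand is $\frac{|\vartheta(t)-t|}{t^2}\le\frac{\log t(\log t-\log\log t)}{8\pi t^{3/2}}$ (your $t^{5/2}$ should be $t^{3/2}$), and since $\int_x^\infty t^{-3/2}(\log t)^2\,dt=\frac{2(\log x)^2+8\log x+16}{\sqrt{x}}$, the tail contributes $\frac{2(\log x)^2}{8\pi\sqrt{x}}$ at leading order — it is \emph{twice} the boundary term, not ``a small implied constant.'' Boundary plus tail is exactly $1+2=3$ units, so \eqref{eqn:moi_1} holds only because the $-\log\log t$ savings defeat the positive lower-order terms $8\log x+16$; this forces $\log\log x$ to exceed roughly $8/3$, i.e.\ $x\gtrsim 10^7$. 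The same happens in \eqref{eqn:moi_2}: the tail is $\sim\frac{2\log x}{8\pi\sqrt{x}}$, not ``lower-order.'' Consequently your hoped-for threshold ``below $50$,'' and a fortiori a strengthened inequality with constant $2.5$ valid for $x\ge 100$, are unattainable by this method: even with constant $2.5$ the tail alone forces $x>5\cdot 10^7$. The paper accepts this and bridges the gap with B\"uthe's computation $|\vartheta(x)-x|\le 1.95\sqrt{x}$ on $[10^6,10^{19}]$ (needed because its large-$x$ argument uses $\log\log t\ge 3.77847$, i.e.\ $t\ge 10^{19}$), reserving direct verification for $x<10^6$. Some such intermediate-range input, or a genuinely large computation, is unavoidable.

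The second failure point is the exponentiation in \eqref{eqn:moi_3}--\eqref{eqn:moi_4}. The logarithm of the bracketed quantity is bounded by essentially $\frac{3\log x}{8\pi\sqrt{x}}$ (the bound from \eqref{eqn:moi_2}), not $\frac{\log x}{8\pi\sqrt{x}}$ as you state; applying $|e^y-1|\le\frac32|y|$ then yields $\approx\frac{4.5\log x}{8\pi\sqrt{x}}$, which overshoots the target. Since $|y|$ already consumes nearly the entire budget, you need $|e^y-1|\le|y|+O(y^2)$ with the quadratic term explicit — the paper uses $e^y\le 1+y+0.501y^2$ together with the slack left over from the proof of \eqref{eqn:moi_2} (its bound there is $\frac{2.95139\log x}{8\pi\sqrt{x}}$ on the middle range and $\frac{3\log x-7.3+o(1)}{8\pi\sqrt{x}}$ for large $x$) and Rosser--Schoenfeld's explicit tail bound $0>\Theta(x)>-\frac{1.02}{(x-1)\log x}$ for the prime-power correction. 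With these two repairs (correct tail-integral asymptotics plus an intermediate-range argument, and a sharper exponentiation inequality) your plan becomes the paper's proof.
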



\subsection*{Structure}

The remainder of this document is organised as follows. In Section \ref{sec:technical_lemmas}, we introduce some auxiliary bounds which will be required to prove Theorems \ref{thm:GoldstonExplicit} and \ref{thm:hauptsatz}. In Section \ref{sec:proofs}, we prove Theorem \ref{thm:GoldstonExplicit}, Theorem \ref{thm:hauptsatz}, and Corollary \ref{cor:broader} using these auxiliary bounds. 

\subsection*{Methodology}

For $x > 10^{19}$, we prove Theorem \ref{thm:GoldstonExplicit} by making a method of Goldston \cite{Goldston} explicit, and Theorem \ref{thm:hauptsatz} by utilising a smoothed explicit formula for $\psi(x)$ from Rosser and Schoenfeld \cite{RosserSchoenfeld}. Computations by B\"{u}the \cite{Buthe} are used to verify Theorems \ref{thm:GoldstonExplicit} and \ref{thm:hauptsatz} for each $x \leq 10^{19}$. Once Theorems \ref{thm:GoldstonExplicit} and \ref{thm:hauptsatz} are proved, we apply them in standard arguments to establish each result in Corollary \ref{cor:broader}. 

\subsection*{Acknowledgements}

ESL thanks the Heilbronn Institute for Mathematical Research for their support. PN thanks the Bristol School of Mathematics Summer Research Bursary for their support. We also thank Adrian Dudek and Daniel Johnston for bringing the papers \cite{Buthe, Goldston} to our attention, as well as other colleagues for their support, discussions, and valuable feedback. 

\section{Auxiliary Bounds on the Distribution of Zeros}\label{sec:technical_lemmas}

In this section, we introduce several important technical results that will be referred to throughout this paper. We always write $\varrho = \beta + i\gamma$ to denote a non-trivial (i.e., complex) zero of $\zeta(s)$.

If $T\geq 2\pi$, then the number $N(T)$ of $\varrho$ such that $0<\gamma \leq T$ satisfies the relationship
\begin{align}
    \left|N(T) - \frac{T}{2\pi} \logb{\frac{T}{2\pi e}} + \frac{7}{8}\right| 
    &\leq \min\{0.28\log{T}, 0.1038\log{T} + 0.2573\log\log{T} + 9.3675\} \nonumber\\
    &:= R(T) . \label{eqn:NT_est}
\end{align}
This result is a combination of observations from Brent--Platt--Trudgian \cite[Cor.~1]{BPT_mean_square} and Hasanalizade--Shen--Wong \cite[Cor.~1.2]{HasanalizadeShenWong}. A straightforward consequence of \eqref{eqn:NT_est} is that
\begin{equation}\label{eqn:use_me_instead}
    N(T) 
    \leq \frac{T\log{T}}{2\pi} .
\end{equation}
Next, exact computations for certain sums over zeros have been done using extensive databases of zeros. For example, the computations in \cite[Tab.~1]{FioriKadiriSwidinsky} tell us that
\begin{equation}\label{eqn:hafaliad3}
    \sum_{0<|\gamma| \leq 10^7} \frac{1}{|\gamma|} \leq \omega_1 := 16.2106480369 .
\end{equation}
When exact computations are not available or applicable, we require theory to bound these sums. To this end, we import an auxiliary bound from Skewes \cite[Lem.~1(ii)]{skewes}, namely
\begin{equation}\label{eqn:Skewes}
    \sum_{|\gamma| \geq T} \frac{1}{\gamma^2} < \frac{\log{T}}{\pi T} 
    \quad\text{for all}\quad T\geq 1 ,
\end{equation}
and the following result from Lehman \cite[Lem.~1]{Lehman}.

\begin{lemma}[Lehman]\label{lem:Lehman}
If $\phi(t)$ is a continuous, positive, non-decreasing function on $2\pi e \leq U \leq t \leq V$, then
\begin{equation*}
    \sum_{\substack{U < \gamma \leq V}} \phi(\gamma)
    \leq \frac{1}{2\pi} \int_{U}^{V} \phi(t)\logb{\frac{t}{2\pi}}\,dt + 4 \phi(U)\log{U} + 2\int_{U}^{\infty} \frac{\phi(u)}{u}\,du .
\end{equation*}
\end{lemma}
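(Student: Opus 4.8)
The plan is to express the sum $\sum_{U < \gamma \le V} \phi(\gamma)$ as a Riemann--Stieltjes integral against the counting measure $dN(t)$, then replace $N(t)$ by its main term $\frac{t}{2\pi}\log\frac{t}{2\pi e}$ using the explicit formula $N(t) = \frac{t}{2\pi}\log\frac{t}{2\pi e} + \frac{7}{8} + S(t) + O(1/t)$, or more crudely using a bound of the shape $|N(t) - \frac{t}{2\pi}\log\frac{t}{2\pi e}| \le c\log t$ valid for $t \ge 2\pi e$ (which is exactly what \eqref{eqn:NT_est} supplies, with a small constant). Writing $M(t) := \frac{1}{2\pi}\log\frac{t}{2\pi}$ for the ``density'' (so that $\frac{d}{dt}\big[\frac{t}{2\pi}\log\frac{t}{2\pi e}\big] = M(t)$), the first step is
\[
\sum_{U < \gamma \le V} \phi(\gamma) = \int_{U}^{V} \phi(t)\,dN(t) = \int_{U}^{V} \phi(t) M(t)\,dt + \int_{U}^{V} \phi(t)\,dE(t),
\]
where $E(t) := N(t) - \frac{t}{2\pi}\log\frac{t}{2\pi e}$ is the error term; the first integral is precisely the main term $\frac{1}{2\pi}\int_U^V \phi(t)\log\frac{t}{2\pi}\,dt$ in the conclusion.

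The second step is to bound $\int_U^V \phi(t)\,dE(t)$ by integration by parts. Since $\phi$ is continuous and non-decreasing, $d\phi \ge 0$, and we get
\[
\int_{U}^{V} \phi(t)\,dE(t) = \phi(V)E(V) - \phi(U)E(U) - \int_{U}^{V} E(t)\,d\phi(t).
\]
Now I would apply the bound $|E(t)| \le c\log t$ (with $c$ small, say $c \le 0.28$ or even the slightly larger constant that makes the clean value $4$ come out, keeping in mind Lehman worked with a weaker zero-counting estimate than \eqref{eqn:NT_est}). The boundary term at $V$ combines with the final integral: bounding $|{-\int_U^V E(t)\,d\phi}| \le c\int_U^V \log t\,d\phi(t)$ and integrating by parts back, $\int_U^V \log t\,d\phi = \phi(V)\log V - \phi(U)\log U - \int_U^V \phi(t)/t\,dt$; the $\phi(V)\log V$ piece cancels against (or is absorbed by) the positive-density main term's relationship to $\phi(V)\log V$ — more precisely, one checks that the $V$-dependent error contributions telescope against an overcount already built into replacing the true sum's tail, leaving only $U$-dependent terms plus the convergent tail integral. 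The remaining pieces are $|\phi(U)E(U)| \le c\,\phi(U)\log U$, the term $c\,\phi(U)\log U$ from the $U$-endpoint of the parts formula, and $+2\int_U^\infty \phi(u)/u\,du$, which dominates (with room to spare, since $\phi$ is positive and we only integrate to $\infty$) the leftover $c\int_U^V \phi(u)/u\,du$ and the discarded negative $V$-terms. Collecting constants gives $4\phi(U)\log U$ as the coefficient.

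The main obstacle is the careful accounting of the $V$-endpoint terms: naively, integration by parts produces a $\phi(V)\log V$ contribution that does not obviously fit under the stated bound, which has no $V$-dependence at all. The resolution is that $E(V)$ can be taken with a sign (or one uses that the sum is being bounded from \emph{above}, so one may freely add the non-negative quantity $\int_V^\infty$ worth of terms and extend integrals to $\infty$), and that the $\phi(V)\log V$ term is genuinely cancelled when one writes the main term integral up to $\infty$ and subtracts the tail — i.e., the honest statement is obtained by comparing against $\int_U^\infty$ rather than $\int_U^V$ and noting the tail contributes the $2\int_U^\infty \phi(u)/u\,du$. I would therefore organise the write-up by first extending everything to $\infty$, handling the clean infinite sums, and only at the end truncating back to $[U,V]$, which makes the disappearance of $V$ transparent and isolates exactly where the constant $4$ (as opposed to $2$) and the factor $2$ on the tail integral come from.
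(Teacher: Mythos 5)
The paper offers no proof of this lemma: it is imported directly from Lehman \cite[Lem.~1]{Lehman}, so your attempt has to be measured against Lehman's own argument. Your skeleton is the right one and is essentially his: write the sum as $\int_U^V\phi(t)\,dN(t)$, split off the smooth part of the Riemann--von Mangoldt formula, whose derivative is $\frac{1}{2\pi}\log\frac{t}{2\pi}$, and integrate the remainder $\int_U^V\phi(t)\,dE(t)$ by parts against a pointwise bound $|E(t)|\leq R(t)$ of logarithmic size.

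The genuine gap is exactly at the point you yourself flag as ``the main obstacle'': the disposal of the $\phi(V)\log V$ contributions. Your proposed resolution --- extend everything to $\infty$, invoke an ``overcount built into the tail'', and absorb the leftovers into $2\int_U^\infty\phi(u)u^{-1}\,du$ --- is not a mechanism that can be made precise, and the reason is the monotonicity hypothesis. As printed the lemma says ``non-decreasing'', but for any positive non-decreasing $\phi$ one has $\int_U^\infty\phi(u)u^{-1}\,du\geq\phi(U)\int_U^\infty u^{-1}\,du=+\infty$, so the stated inequality is vacuous; moreover the paper applies the lemma in \eqref{eqn:hafaliad5} and in Lemma \ref{lem:RosserScheonfeldSmoothEFConsequence} with $\phi(t)=1/t$, which is decreasing. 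The hypothesis is a typo for \emph{non-increasing}, which is the case Lehman proves and the only case worth proving. Once $\phi$ is non-increasing, the $V$-terms cancel \emph{exactly} and no limiting argument is needed: integration by parts gives $\int_U^V\phi\,dE=\phi(V)E(V)-\phi(U)E(U)-\int_U^V E\,d\phi$, and since $-d\phi\geq 0$ one has $-\int_U^V E\,d\phi\leq-\int_U^V R(t)\,d\phi(t)=R(U)\phi(U)-R(V)\phi(V)+\int_U^V R'(t)\phi(t)\,dt$; the term $-R(V)\phi(V)$ annihilates the boundary contribution $\phi(V)E(V)\leq R(V)\phi(V)$. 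What survives is $2R(U)\phi(U)+\int_U^V R'(t)\phi(t)\,dt$, and checking that $R(t)\leq 2\log t$ and $R'(t)\leq 2/t$ on $t\geq 2\pi e$ (with Lehman's $R(t)=0.137\log t+0.443\log\log t+1.588$, or with the stronger bound \eqref{eqn:NT_est}) yields the constants $4$ and $2$. Your version, which keeps the sign ambiguity in $E(V)$ and tries to discard the $V$-terms by comparison with infinite integrals, does not close; with the corrected monotonicity the cancellation is automatic.
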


\begin{remark}
Brent, Platt, and Trudgian have refined Lemma \ref{lem:Lehman} in \cite[Cor.~1]{BrentAccurate}. However, we do not require their improved result, which is slightly more technical to apply, for our purposes. 
\end{remark}

\section{Proof of Main Results}\label{sec:proofs}

In this section, we prove each of our main results (Theorem \ref{thm:GoldstonExplicit}, Theorem \ref{thm:hauptsatz}, and Corollary \ref{cor:broader}). 

\subsection{Proof of Theorem \ref{thm:GoldstonExplicit}}\label{ssec:thm:GoldstonExplicit}

To prove Theorem \ref{thm:GoldstonExplicit} for $x \geq 10^{19}$, we make explicit a method of Goldston \cite{Goldston}. To this end, note that $\psi(x)$ is non-decreasing, so
\begin{equation}\label{eqn:hafaliad1}
    \frac{\psi_1(x-h) - \psi_1(x)}{-h} 
    \leq \psi(x) \leq
    \frac{\psi_1(x+h) - \psi_1(x)}{h} ,
\end{equation}
for some $1\leq h \leq x/2$. Next, note that
\begin{equation}\label{eqn:explicit_fmla_psi1}
    \psi_1(x) 
    = \int_2^x \psi(t)\,dt
    = \sum_{n\leq x} \Lambda(n) (x-n)
    = \frac{x^2}{2} - \sum_{\varrho} \frac{x^{\varrho+1}}{\varrho (\varrho +1)} - x \log(2\pi) + \epsilon(x) ,
\end{equation}
in which $x\not\in\mathbb{Z}$ and $1.545 < \epsilon(x) < 2.069$; see \cite[Lem.~3]{CullyHugillDudek}. The explicit formula for $\psi_1(x)$ in \eqref{eqn:explicit_fmla_psi1} holds for $x\not\in\mathbb{Z}$, but one can circumvent any issues when $x\in\mathbb{Z}$ by writing
\begin{equation*}
    \psi_1(x) = \frac{\psi_1(x+0^+) + \psi_1(x-0^+)}{2} .
\end{equation*}
The explicit formula \eqref{eqn:explicit_fmla_psi1} implies
\begin{equation}\label{eqn:hafaliad2}
    \Bigg| \frac{\psi_1(x\pm h) - \psi_1(x)}{\pm h}
    - x \mp \frac{h}{2} \pm \sum_{\varrho} \frac{(x\pm h)^{\varrho + 1} - x^{\varrho + 1}}{h \varrho (\varrho + 1)} + \logb{2\pi} \Bigg| \leq 1 .
\end{equation}
In the following lemmas, we approximate the sum over zeros in \eqref{eqn:hafaliad2} against a truncated sum over zeros, and provide a bound for the truncated sum. 

\begin{lemma}\label{lem:canlyniad1}
If the RH is true, $h = x/y$, and $y\geq H_1 := 10^7$, then  
\begin{equation*}
\begin{split}
    &\Bigg|\sum_{\varrho} \frac{(x\pm h)^{\varrho + 1} - x^{\varrho + 1}}{\pm  h \varrho (\varrho + 1)}
    - \sum_{|\gamma| \leq y} \frac{x^{\varrho}}{\varrho} \Bigg| 
    < \left(2.6 + 2 \left(1 + \frac{1}{y}\right)^{\frac{3}{2}} \right) \frac{\sqrt{x} \log{y}}{\pi}
    + \frac{2.6 \sqrt{x}}{\pi y} \left(\logb{\frac{y}{2\pi}}\right)^2 .
\end{split}
\end{equation*}
\end{lemma}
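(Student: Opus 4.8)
The plan is to pass from each finite difference to the ``sharp'' summand $x^{\varrho}/\varrho$ via the integral representation
\[
\frac{(x\pm h)^{\varrho+1}-x^{\varrho+1}}{\pm h(\varrho+1)} = \frac{1}{h}\int_{I_\pm}t^{\varrho}\,dt ,
\]
where $I_+=[x,x+h]$ and $I_-=[x-h,x]$, so that $\frac{(x\pm h)^{\varrho+1}-x^{\varrho+1}}{\pm h\varrho(\varrho+1)}=\frac{1}{\varrho h}\int_{I_\pm}t^{\varrho}\,dt$. Under the RH every summand of $\sum_{\varrho}\frac{(x\pm h)^{\varrho+1}-x^{\varrho+1}}{\pm h\varrho(\varrho+1)}$ is $O\!\big(x^{3/2}/(h|\varrho||\varrho+1|)\big)$, so this series converges absolutely, may be split at $|\gamma|=y$, and after subtracting the finite sum $\sum_{|\gamma|\le y}x^{\varrho}/\varrho$ term by term I obtain
\[
\sum_{\varrho}\frac{(x\pm h)^{\varrho+1}-x^{\varrho+1}}{\pm h\varrho(\varrho+1)}-\sum_{|\gamma|\le y}\frac{x^{\varrho}}{\varrho}
=\sum_{|\gamma|\le y}\frac{1}{\varrho}\left(\frac{1}{h}\int_{I_\pm}t^{\varrho}\,dt-x^{\varrho}\right)+\sum_{|\gamma|>y}\frac{1}{\varrho h}\int_{I_\pm}t^{\varrho}\,dt .
\]
I call the first sum the \emph{head} and the second the \emph{tail}, and bound each separately.

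For the tail I would use only crude estimates. The RH gives $\beta=1/2$, hence $|\varrho|>|\gamma|$ and $|\varrho+1|>|\gamma|$, and $\big|\int_{I_\pm}t^{\varrho}\,dt\big|=\big|\frac{(x\pm h)^{\varrho+1}-x^{\varrho+1}}{\varrho+1}\big|\le\frac{(x+h)^{3/2}+x^{3/2}}{|\varrho+1|}$ for either sign; hence each tail summand is $<\frac{(x+h)^{3/2}+x^{3/2}}{h\gamma^{2}}$. With $h=x/y$ this equals $\frac{y\sqrt{x}}{\gamma^{2}}\big((1+1/y)^{3/2}+1\big)$, and summing over $|\gamma|>y$ through Skewes' bound \eqref{eqn:Skewes}, namely $\sum_{|\gamma|>y}\gamma^{-2}<\frac{\log y}{\pi y}$, cancels the factor $y$ and leaves at most $\frac{\sqrt{x}\log y}{\pi}\big((1+1/y)^{3/2}+1\big)<\frac{2(1+1/y)^{3/2}\sqrt{x}\log y}{\pi}$, uniformly in the sign. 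This is exactly the second group of terms in the asserted bound.

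For the head I would exploit that $I_\pm$ has length $h=x/y$ with $x$ as an endpoint. An integration by parts (equivalently, Fubini applied to $t^{\varrho}-x^{\varrho}=\pm\varrho\int u^{\varrho-1}\,du$) gives $\frac{1}{h}\int_{I_\pm}t^{\varrho}\,dt-x^{\varrho}=\pm\frac{\varrho}{h}\int_{I_\pm}w_\pm(u)\,u^{\varrho-1}\,du$, where $w_\pm(u)$ is the distance from $u$ to the endpoint of $I_\pm$ other than $x$, so $0\le w_\pm(u)\le h$. Thus the troublesome factor $\varrho$ cancels the $1/\varrho$, and each head summand is bounded by $\frac{1}{h}(x-h)^{-1/2}\cdot\frac{h^{2}}{2}=\frac{\sqrt{x}}{2y}(1-1/y)^{-1/2}$, up to a slightly more careful treatment when $|\gamma|$ is of size comparable to $y$. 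Summing over the $2N(y)$ zeros with $0<|\gamma|\le y$, estimated through \eqref{eqn:use_me_instead} (and, where a weighted sum $\sum_{0<|\gamma|\le y}|\gamma|^{-1}$ or an integral $\int\log(t/2\pi)\,dt$ arises, through Lehman's Lemma \ref{lem:Lehman} together with \eqref{eqn:NT_est}), produces a contribution of the shape $\frac{c\sqrt{x}\log y}{\pi}+\frac{c\sqrt{x}}{\pi y}\big(\log(y/2\pi)\big)^{2}$; using $y\ge 10^{7}$ to absorb the numerical constants shows this is at most $\frac{2.6\sqrt{x}\log y}{\pi}+\frac{2.6\sqrt{x}}{\pi y}\big(\log(y/2\pi)\big)^{2}$. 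Adding the head and tail bounds gives the lemma.

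The main obstacle is the head: one must choose, uniformly in the zero $\varrho$ and in the choice of sign, an estimate for $\big|\frac{1}{h}\int_{I_\pm}t^{\varrho}\,dt-x^{\varrho}\big|$ strong enough that the subsequent summation over the roughly $y\log y$ low-lying zeros stays of size $O(\sqrt{x}\log y)$ — the naive triangle-inequality bound $|t^{\varrho}|+|x^{\varrho}|\ll\sqrt{x}$ is useless here — while simultaneously keeping the separation of $\sum_{\varrho}$ from $\sum_{|\gamma|\le y}$ legitimate. The tail, by comparison, is a direct application of \eqref{eqn:Skewes}.
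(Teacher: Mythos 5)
Your argument is correct and establishes the lemma, but it takes a genuinely different route from the paper in the one place where the work is concentrated. The paper writes the difference between each low-lying summand and $x^{\varrho}/\varrho$ as a Taylor-type remainder $w_{\varrho}=x^{\varrho+1}\bigl((1\pm h/x)^{\varrho+1}-1\mp(\varrho+1)h/x\bigr)/\bigl(h\varrho(\varrho+1)\bigr)$ and invokes a lemma of Goldston to get $|w_{\varrho}|\leq \frac{2.6h}{\sqrt{x}}\bigl(1+\frac{2}{|\varrho|}\bigr)$; the residual $2/|\varrho|$ then forces a second summation $\sum_{|\gamma|\leq y}|\gamma|^{-1}$, handled via $\omega_1$ and Lemma \ref{lem:Lehman}, which is precisely where the term $\frac{2.6\sqrt{x}}{\pi y}\bigl(\log(y/2\pi)\bigr)^2$ in the statement comes from. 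Your exact integral representation $\frac{1}{h}\int_{I_\pm}t^{\varrho}\,dt-x^{\varrho}=\pm\frac{\varrho}{h}\int_{I_\pm}w_\pm(u)u^{\varrho-1}\,du$ gives the \emph{uniform} per-zero bound $\frac{h}{2}(x-h)^{-1/2}=\frac{\sqrt{x}}{2y}(1-1/y)^{-1/2}$ with no $1/|\varrho|$ correction, so only the zero count $2N(y)\leq\frac{y\log y}{\pi}$ from \eqref{eqn:use_me_instead} is needed and the head contributes at most $\frac{\sqrt{x}\log y}{2\pi}(1-1/y)^{-1/2}$, comfortably below the paper's $\frac{2.6\sqrt{x}\log y}{\pi}$; in particular your hedges about Lehman's lemma, weighted sums $\sum|\gamma|^{-1}$, and ``more careful treatment when $|\gamma|$ is comparable to $y$'' are all unnecessary, and the $(\log(y/2\pi))^2$ term in the statement is simply absorbed as slack. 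Your tail estimate via \eqref{eqn:Skewes} is the same as the paper's. The net effect is that your argument is both more elementary (no appeal to Goldston's numerical lemma or to exact zero data for this step) and slightly sharper; the only thing the paper's route ``buys'' is consistency with Goldston's original presentation, which the authors are making explicit.
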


\begin{proof}
To begin, note that
\begin{equation*}
    \sum_{|\gamma| \leq y} \frac{(x\pm h)^{\varrho + 1} - x^{\varrho + 1}}{\pm  h \varrho (\varrho + 1)}
    = \sum_{|\gamma| \leq y} \frac{x^{\varrho}}{\varrho} + \sum_{|\gamma| \leq y} w_{\varrho},
\end{equation*}
where
\begin{equation*}
    w_{\varrho} = x^{\varrho + 1} \left(\frac{(1\pm h/x)^{\varrho + 1} - 1 \mp (\varrho + 1) h/x}{h \varrho (\varrho + 1)}\right) .
\end{equation*}
It follows from the lemma in \cite[Sec.~2]{Goldston} that
\begin{equation*}
    |w_{\varrho}| 
    \leq 2.6 x^{\frac{3}{2}} \left(\frac{|\varrho + 1| (|\varrho + 1| + 1) (h/x)^2}{h |\varrho| |\varrho + 1|}\right) 
    \leq \frac{2.6 h}{\sqrt{x}} \left(\frac{|\varrho + 1| + 1}{|\varrho|}\right) 
    \leq \frac{2.6 h}{\sqrt{x}} \left(1 + \frac{2}{|\varrho|}\right) .
\end{equation*}
Therefore,
\begin{equation}\label{eqn:hafaliad4}
\begin{split}
    \Bigg|\sum_{\varrho} \frac{(x\pm h)^{\varrho + 1} - x^{\varrho + 1}}{\pm  h \varrho (\varrho + 1)}
    - \sum_{|\gamma| \leq y} \frac{x^{\varrho}}{\varrho} \Bigg| 
    &\leq \frac{2.6 h}{\sqrt{x}} \left(\sum_{|\gamma| \leq y} 1 + \sum_{|\gamma| \leq y} \frac{2}{|\gamma|}\right) + \frac{2 (x+h)^{\frac{3}{2}}}{h} \sum_{|\gamma| > y} \frac{1}{\gamma^2}.
\end{split}
\end{equation}
It follows from \eqref{eqn:use_me_instead} that
\begin{equation*}
    \frac{2.6 h}{\sqrt{x}} \sum_{|\gamma| \leq y} 1
    \leq \frac{2.6 h y \log{y}}{\pi \sqrt{x}}
    \leq \frac{2.6 \sqrt{x} \log{y}}{\pi} .
\end{equation*}
Next, \eqref{eqn:Skewes} implies
\begin{equation*}
    \frac{2 (x+h)^{\frac{3}{2}}}{h} \sum_{|\gamma| > y} \frac{1}{\gamma^2} 
    \leq \frac{2 (x+h)^{\frac{3}{2}} \log{y}}{\pi h y} 
    = \frac{2 (x+h)^{\frac{3}{2}} \log{y}}{\pi x} 
    = \frac{2}{\pi} \left(1 + \frac{1}{y}\right)^{\frac{3}{2}} \sqrt{x} \log{y} .
\end{equation*}
Finally, $y\geq H_1$, \eqref{eqn:hafaliad3}, and Lemma \ref{lem:Lehman} imply
\begin{align}
    \frac{2.6 h}{\sqrt{x}} \sum_{|\gamma| \leq y} \frac{2}{|\gamma |}
    &\leq \frac{5.2 h}{\sqrt{x}} \Bigg( \sum_{|\gamma| \leq H_1} + \sum_{H_1 < |\gamma| \leq y} \Bigg) \frac{1}{|\gamma |} \nonumber\\
    &\leq \frac{5.2 h}{\sqrt{x}} \left(\omega_1 + \frac{1}{\pi} \int_{H_1}^{y} \frac{1}{t}\logb{\frac{t}{2\pi}}\,dt + \frac{8\log{H_1} + 4}{H_1}\right) \nonumber\\
    &= \frac{5.2 h}{\sqrt{x}} \left(\omega_1 + \frac{1}{2\pi} \left(\left(\logb{\frac{y}{2\pi}}\right)^2 - \left(\logb{\frac{H_1}{2\pi}}\right)^2\right) + \frac{8\log{H_1} + 4}{H_1}\right) \nonumber\\
    &< \frac{2.6 \sqrt{x}}{\pi y} \left(\logb{\frac{y}{2\pi}}\right)^2 . \label{eqn:hafaliad5}
\end{align}
Insert these observations into \eqref{eqn:hafaliad4} to reveal the result. 
\end{proof}

\begin{lemma}\label{lem:canlyniad2}
If the RH is true, $y = \sqrt{x}/\log{x}$, and $x\geq 10^{19}$, then  
\begin{equation*}
    \Bigg|\sum_{|\gamma| \leq y} \frac{x^{\varrho}}{\varrho} \Bigg| 
    < \frac{\sqrt{x}\log{x}}{2\pi} \left(\frac{\log{x}}{4} - \log\log{x}\right) .
\end{equation*}
\end{lemma}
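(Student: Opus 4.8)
The plan is to bound $\bigl|\sum_{|\gamma|\le y} x^\varrho/\varrho\bigr|$ by passing to absolute values and invoking RH, so that $|x^\varrho/\varrho| = \sqrt{x}/|\varrho| \le \sqrt{x}/\max\{|\gamma|,1/2\}$ for each term. Since zeros come in conjugate pairs, this reduces the problem to estimating $2\sqrt{x}\sum_{0<\gamma\le y} 1/|\varrho|$, which in turn is controlled by $2\sqrt{x}\sum_{0<\gamma\le y} 1/\gamma$ up to the contribution of the lowest-lying zeros (the first zero has $\gamma \approx 14.13$, so $1/|\varrho| = 1/\sqrt{1/4+\gamma^2}$ is very close to $1/\gamma$ and the discrepancy is a tiny absolute constant). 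So the real task is an explicit upper bound for $\sum_{0<\gamma\le y} 1/\gamma$ when $y = \sqrt{x}/\log x$ and $x \ge 10^{19}$ (so $y$ is enormous, certainly $y \ge 10^7$ and $y \ge 2\pi e$).

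First I would split $\sum_{0<\gamma\le y} 1/\gamma = \sum_{0<\gamma\le H_1} 1/\gamma + \sum_{H_1 < \gamma \le y} 1/\gamma$ with $H_1 = 10^7$, bounding the first piece by $\tfrac12\omega_1$ from \eqref{eqn:hafaliad3} and the second piece by Lehman's Lemma \ref{lem:Lehman} with $\phi(t) = 1/t$ — exactly the computation already carried out in \eqref{eqn:hafaliad5}, which gives $\sum_{H_1<\gamma\le y} 1/\gamma \le \tfrac1{2\pi}\bigl((\log(y/2\pi))^2 - (\log(H_1/2\pi))^2\bigr) + (8\log H_1 + 4)/H_1$ plus the $2\int_{H_1}^\infty \phi/u\,du = 2/H_1$ tail term. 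Then I would substitute $y = \sqrt{x}/\log x$, so $\log y = \tfrac12\log x - \log\log x$, and expand $(\log y - \log 2\pi)^2 = \bigl(\tfrac12\log x - \log\log x - \log 2\pi\bigr)^2$. Multiplying through by $2\sqrt{x}$ and comparing with the target $\frac{\sqrt{x}\log x}{2\pi}\bigl(\tfrac14\log x - \log\log x\bigr) = \frac{\sqrt{x}}{2\pi}\bigl(\tfrac14(\log x)^2 - \log x\log\log x\bigr)$, the leading $\tfrac14(\log x)^2$ and the cross term $-\log x\log\log x$ match exactly, and everything left over is of strictly lower order — bounded linearly in $\log x$ and $\log\log x$ plus constants — so it suffices to check that this leftover is negative (equivalently, absorbed) for $x \ge 10^{19}$, which is where the hypothesis $x \ge 10^{19}$ is spent.

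The main obstacle is bookkeeping rather than conceptual: I must track the lower-order terms carefully enough to see that the inequality is \emph{strict} at $x = 10^{19}$, accounting for (i) the gap between $1/|\varrho|$ and $1/\gamma$ from the low zeros, (ii) the constant $\tfrac12\omega_1$ versus the $-\tfrac1{2\pi}(\log(H_1/2\pi))^2$ that Lehman subtracts (these largely cancel, as \eqref{eqn:hafaliad5} already exploits), (iii) the $(8\log H_1+4)/H_1$ and $2/H_1$ tail terms from Lehman, and (iv) the linear-in-$\log x$ terms $-\tfrac{1}{2\pi}(2\log 2\pi)(\tfrac12\log x)$ and $+\tfrac1{2\pi}(\log 2\pi)^2$ and $+\tfrac1{2\pi}(2\log 2\pi)\log\log x$ arising from the square expansion. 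Since $\log x \ge 19\log 10 \approx 43.7$, the negative coefficient on the $\log x\log\log x$ cross term (which is exactly matched) leaves plenty of room; concretely I would bound $\log\log x$ crudely and verify a single numerical inequality of the form $a\log x + b\log\log x + c \le 0$ for all $x \ge 10^{19}$. I expect the result to follow with margin to spare, so no delicate optimisation of constants is needed here.
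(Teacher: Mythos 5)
Your approach is the paper's: bound the sum by $\sqrt{x}\sum_{|\gamma|\le y}1/|\gamma|$ (under RH $|\varrho|=\sqrt{1/4+\gamma^2}\ge|\gamma|$, so no correction for low-lying zeros is needed), split at $H_1=10^7$ using $\omega_1$ and Lemma \ref{lem:Lehman} exactly as in \eqref{eqn:hafaliad5} to reach $\frac{\sqrt{x}}{2\pi}\bigl(\log(y/2\pi)\bigr)^2$, then expand $\bigl(\tfrac12\log x-\log\log x-\log 2\pi\bigr)^2$ and check that the leftover $(\log\log x+\log 2\pi)^2-\log(2\pi)\log x$ is negative for $x\ge 10^{19}$. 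One bookkeeping caveat: the quantity $\tfrac{1}{2\pi}\bigl((\log(y/2\pi))^2-(\log(H_1/2\pi))^2\bigr)+(8\log H_1+4)/H_1$ from \eqref{eqn:hafaliad5} bounds the \emph{two-sided} sum $\sum_{H_1<|\gamma|\le y}1/|\gamma|$, not the one-sided $\sum_{H_1<\gamma\le y}1/\gamma$ as you wrote (and the $2/H_1$ tail is already inside the $+4/H_1$), so you must not multiply it by a further factor of $2$; with that fixed the leading and cross terms match as you say and the inequality holds with ample margin.
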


\begin{proof}
Following similar arguments to the deduction of \eqref{eqn:hafaliad5}, we see that 
\begin{equation*}
    \Bigg|\sum_{|\gamma| \leq y} \frac{x^{\varrho}}{\varrho} \Bigg| < \frac{\sqrt{x}}{2\pi}\left(\logb{\frac{y}{2\pi}}\right)^2 .
\end{equation*}
Now, 
\begin{align*}
    \left(\logb{\frac{\sqrt{x}}{2\pi\log{x}}}\right)^2
    &= \log{x} \left(\frac{\log{x}}{4} - \left(1 + \frac{\logb{2\pi}}{\log\log{x}} - \frac{(\logb{2\pi\log{x}})^2}{\log{x} \log\log{x}}\right) \log\log{x}\right) \\
    &< \log{x} \left(\frac{\log{x}}{4} - \log\log{x}\right) ,
\end{align*}
because the coefficient of $\log\log{x}$ increases in the range $e < x \leq 2.00299\cdot 10^{38}$, decreases in the range $x > 2.00299\cdot 10^{38}$, and tends to the limit $1$ as $x\to\infty$. The result follows naturally.
\end{proof}

Now, we combine these ingredients to prove the result.

\begin{proof}[Proof of Theorem \ref{thm:GoldstonExplicit}]
Apply \eqref{eqn:hafaliad2} and Lemma \ref{lem:canlyniad1} in \eqref{eqn:hafaliad1} with $h = x/y$ to see that if $y\geq H_1$ and the RH is true, then
\begin{align}
    \Bigg|\psi(x) - x + \sum_{|\gamma| \leq y} \frac{x^{\varrho}}{\varrho}\Bigg| &< \frac{x}{2y} + \left(2.6 + 2 \left(1 + \frac{1}{y}\right)^{\frac{3}{2}} \right) \frac{\sqrt{x} \log{y}}{\pi}
    + \frac{2.6 \sqrt{x}}{\pi y} \left(\logb{\frac{y}{2\pi}}\right)^2 + 2.84 \nonumber\\
    &\leq \frac{x}{2y} + 1.465 \sqrt{x} \log{y} . \label{eqn:hafaliad6}
\end{align}
Next, assert $y = \sqrt{x}/\log{x}$ in \eqref{eqn:hafaliad6}, which satisfies $ y\geq H_1$ on $x\geq 10^{19}$, to reveal
\begin{align*}
    \Bigg|\psi(x) - x + \sum_{|\gamma| \leq y} \frac{x^{\varrho}}{\varrho} \Bigg| 
    &\leq \frac{\sqrt{x}\log{x}}{2} + 1.465 \sqrt{x}\left(\frac{\log{x}}{2} - \log\log{x} \right) \\
    &\leq \left( \frac{2.465}{2} - \frac{1.465\log\log{x}}{\log{x}} \right) \sqrt{x}\log{x}
\end{align*}
Apply Lemma \ref{lem:canlyniad2} to see
\begin{align*}
    |\psi(x) - x| 
    &< \sqrt{x}\log{x} \left(\frac{\log{x}}{8\pi} - \left(\frac{1}{2\pi} + \frac{1.465}{\log{x}}\right) \log\log{x} + 1.2325 \right) \\
    &= \sqrt{x}\log{x} \left(\frac{\log{x}}{8\pi} - \left(\frac{1}{2\pi} + \frac{1.465}{\log{x}} - \frac{1.2325}{\log\log{x}}\right) \log\log{x} \right) .
\end{align*}
With this, the result has been established for all $x\geq 10^{19}$, so all that remains is to prove the result for $x < 10^{19}$. To this end, recall that B\"{u}the \cite{Buthe} has verified that
\begin{equation}\label{eqn:Buthe}
    |\psi(x) - x| \leq 0.94 \sqrt{x}
    \quad\text{for all}\quad
    11 \leq x \leq 10^{19} .
\end{equation}
If $x\geq 11$, then 
\begin{equation*}
    0.94\leq \log{x} \left(\frac{\log{x}}{8\pi} - \left(\frac{1}{2\pi} + \frac{1.465}{\log{x}} - \frac{1.2325}{\log\log{x}}\right) \log\log{x} \right) ,
\end{equation*}
so the result is proved for all $x\geq 11$.
\end{proof}

\subsection{Proof of Theorem \ref{thm:hauptsatz}}\label{ssec:hauptsatz}

Here, we prove \eqref{eqn:pnt_psi} and \eqref{eqn:pnt_theta} using Theorem \ref{thm:GoldstonExplicit}, computations from B\"{u}the \cite{Buthe}, and the following technical lemma (Lemma \ref{lem:RosserScheonfeldSmoothEFConsequence}). A key ingredient in our proof of Lemma \ref{lem:RosserScheonfeldSmoothEFConsequence} is a smoothed explicit formula for $\psi(x)$ from Rosser and Schoenfeld \cite{RosserSchoenfeld}. 

\begin{lemma}\label{lem:RosserScheonfeldSmoothEFConsequence}
Suppose that the RH is true, $H_1 := 10^7$, and
\begin{equation*}
    T_0(x) = \frac{\pi\sqrt{x}}{\log{x}} \left(1 + \frac{\log{x}}{2\pi\sqrt{x}} \right)^{-1} \left(\left(1+\frac{\log{x}}{\pi\sqrt{x}}\right)^{2} + 1\right) .
\end{equation*}
If $x \geq 10^{19}$, then
\begin{equation*}
\begin{split}
    \frac{\left|\psi(x) - x \right|}{x} 
    &\leq \frac{\log{x}}{2\pi\sqrt{x}} + \frac{\logb{2\pi} - \frac{1}{2}\logb{1-x^{-2}}}{x} \\
    &\qquad + x^{- \frac{1}{2}} \left(1 + \frac{\log{x}}{2\pi\sqrt{x}} \right) \Bigg( 
    \frac{1}{2\pi} \left(\logb{\frac{T_0(x)}{2\pi}}\right)^2 + \frac{\log{T_0(x)}}{\pi T_0(x)} \\
    &\qquad\hspace{4cm}+ \omega_1 + \frac{8\log{H_1} + 4}{H_1} - \frac{1}{2\pi} \left(\logb{\frac{H_1}{2\pi}}\right)^2
    \Bigg) .
\end{split}
\end{equation*}
\end{lemma}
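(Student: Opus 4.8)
The plan is to start from the smoothed explicit formula for $\psi(x)$ due to Rosser and Schoenfeld \cite{RosserSchoenfeld}, which expresses $\psi(x)$ (or a suitable smoothed variant) in terms of $x$, a main term correction involving $\log(2\pi)$ and $\tfrac12\log(1-x^{-2})$, and a sum over the non-trivial zeros truncated at some height $T$, together with an explicit error term controlling the smoothing. Concretely, I would invoke their formula in the shape
\begin{equation*}
    \left|\psi(x) - x\right| \leq \left(1 + \frac{\log x}{2\pi\sqrt{x}}\right)\left(\text{(zero-sum up to height $T_0(x)$)} + \text{(tail over $|\gamma| > T_0(x)$)}\right) + \left(\log(2\pi) - \tfrac12\log(1-x^{-2})\right),
\end{equation*}
after dividing through by $x$; the factor $1 + \log x/(2\pi\sqrt{x})$ and the specific choice of $T_0(x)$ are exactly what is needed so that the smoothing parameter in Rosser--Schoenfeld is tuned optimally. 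The first step is therefore to quote the precise Rosser--Schoenfeld formula and identify how their free parameters (smoothing width $\delta$, truncation height $T$) translate into the quantities $T_0(x)$ and $1 + \log x/(2\pi\sqrt{x})$ appearing in the statement.

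Next I would bound the truncated zero-sum $\sum_{|\gamma|\leq T_0(x)} |x^\varrho/\varrho|$ under RH. Since $\beta = 1/2$, each term contributes $\sqrt{x}/|\gamma|$, so the sum is $\sqrt{x}\sum_{0<|\gamma|\leq T_0(x)} 1/|\gamma|$ (pairing $\gamma$ with $-\gamma$). I would split this at height $H_1 = 10^7$: for $|\gamma| \leq H_1$ use the exact computational bound $\sum_{0<|\gamma|\leq 10^7} 1/|\gamma| \leq \omega_1$ from \eqref{eqn:hafaliad3}, and for $H_1 < |\gamma| \leq T_0(x)$ apply Lehman's Lemma \ref{lem:Lehman} with $\phi(t) = 1/t$, which yields
\begin{equation*}
    \sum_{H_1 < \gamma \leq T_0(x)} \frac{1}{\gamma} \leq \frac{1}{2\pi}\left(\left(\log\frac{T_0(x)}{2\pi}\right)^2 - \left(\log\frac{H_1}{2\pi}\right)^2\right) + \frac{8\log H_1 + 4}{H_1},
\end{equation*}
exactly as in the derivation of \eqref{eqn:hafaliad5} in the proof of Lemma \ref{lem:canlyniad1} (noting $T_0(x) \geq 2\pi e$ for $x \geq 10^{19}$, which must be checked). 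Adding the two pieces and the factor $2$ from the $\pm\gamma$ pairing accounts for the $\tfrac{1}{2\pi}(\log(T_0(x)/2\pi))^2 + \omega_1 + (8\log H_1 + 4)/H_1 - \tfrac{1}{2\pi}(\log(H_1/2\pi))^2$ block; the tail $\sum_{|\gamma| > T_0(x)} 1/\gamma^2$ is handled by Skewes' bound \eqref{eqn:Skewes}, giving $\log T_0(x)/(\pi T_0(x))$ after accounting for how the tail enters the smoothed formula (the smoothing replaces the usual $x^{\varrho+1}/(\varrho(\varrho+1))$-type tail with something controlled by $\sum 1/\gamma^2$ times a factor absorbed into the choice of $T_0(x)$).

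The main obstacle I anticipate is the bookkeeping in the first step: extracting from Rosser and Schoenfeld's smoothed explicit formula the exact inequality in the displayed form, with the correct constants and the precise definition of $T_0(x)$, rather than a slightly weaker or differently-packaged version. Their paper states the formula with a smoothing kernel and several auxiliary functions, and one must (i) choose the smoothing parameter as a function of $x$ to balance the error against the zero-sum, (ii) track how the truncation height and the smoothing interact, and (iii) verify that all the side conditions (e.g. $x \geq 10^{19}$ suffices for $T_0(x) \geq 2\pi e$, for the Lehman hypotheses, and for any monotonicity used) actually hold. Once the formula is in hand, the remaining steps are routine applications of \eqref{eqn:hafaliad3}, Lemma \ref{lem:Lehman}, and \eqref{eqn:Skewes}, essentially mirroring the estimates already carried out for Lemma \ref{lem:canlyniad1}. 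I would close by assembling the pieces, dividing by $x$, and collecting terms to match the claimed bound exactly.
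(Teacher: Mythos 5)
Your plan follows the paper's proof essentially step for step: invoke Rosser--Schoenfeld's smoothed explicit formula (their Lemma 8), note that RH kills the sums over zeros off the critical line, split the remaining zero-sum at $H_1 = 10^7$ using the computed value $\omega_1$ together with Lemma \ref{lem:Lehman}, handle the tail with Skewes' bound \eqref{eqn:Skewes}, and tune the smoothing width $\delta = \log{x}/(\pi\sqrt{x})$ and truncation height so that the tail factor equals $1 + \log{x}/(2\pi\sqrt{x})$ and the truncation height becomes $T_0(x)$ (this is exactly the paper's choice \eqref{eqn:choices} with $m=1$). The bookkeeping you flag as the main obstacle is indeed the bulk of the work, but your schematic is correct in substance (the term $\log{x}/(2\pi\sqrt{x})$ missing from your displayed sketch is just the $m\delta/2$ contribution, which your tuning of $\delta$ already accounts for), so this is the same argument as the paper's.
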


\begin{proof}
Let $T_1$, $T_2$ denote non-negative real numbers, $m>0$ be an integer, $x>1$, and $0 < \delta < (1-x^{-1})/m$. Rosser and Schoenfeld proved in \cite[Lem.~8]{RosserSchoenfeld} that
\begin{equation*}
    \frac{\left|\psi(x) - x -\logb{2\pi} + \frac{1}{2}\logb{1-x^{-2}}\right|}{x} \leq \frac{m\delta}{2} +\frac{S_1(m,\delta) + S_2(m,\delta)}{\sqrt{x}} + S_3(m,\delta) + S_4(m,\delta) ,
\end{equation*}
where $R_m(u) = \left((1+u)^{m+1}+1\right)^m$, $R =  9.645908801$, 
\begin{align*}
    S_1(m,\delta) &= \sum_{\substack{\beta \leq 1/2 \\ 0 < |\gamma| <T_1}} \frac{2+m\delta}{2|\varrho|}, \\
    S_2(m,\delta) &= \sum_{\substack{\beta \leq 1/2 \\ |\gamma| > T_1}} \frac{R_m(\delta)}{\delta^m|\varrho(\varrho+1)\cdots (\varrho+m)|}, \\
    S_3(m,\delta) &= \sum_{\substack{1/2<\beta \\ 0 < |\gamma| \leq T_2}} \frac{(2+m\delta)\exp\left\{-\frac{\log{x}}{R\log(\gamma/17)}\right\}}{2|\varrho|}, \\
    S_4(m,\delta) &= \sum_{\substack{1/2<\beta\\ |\gamma| > T_2}}
    \frac{R_m(\delta)\exp\left\{-\frac{\log x}{R\log(\gamma/17)}\right\}}{\delta^m|\varrho(\varrho+1)\cdots (\varrho+m)|}.
\end{align*}
The value of $R$ corresponds to the constant in Stechkin's effective version of the classical zero-free region for $\zeta(s)$, see \cite{Stechkin}. We assume the RH, so $S_3(m,\delta) = S_4(m,\delta) = 0$ and we can re-write the preceding relationship as
\begin{equation}\label{eqn:StepOne}
    \frac{\left|\psi(x) - x \right|}{x} \leq \frac{m\delta}{2} + \frac{S_1(m,\delta) + S_2(m,\delta)}{\sqrt{x}} + \frac{\logb{2\pi} - \frac{1}{2}\logb{1-x^{-2}}}{x} ,
\end{equation}
so the constant $R$ has no effect for us. All that remains is to bound $S_1(m,\delta)$ and $S_2(m,\delta)$. 
It follows from \eqref{eqn:hafaliad3} that if {$T_1 > H_1$}, then \eqref{eqn:StepOne} can be re-written as
\begin{equation}\label{eqn:StepTwo}
\begin{split}
    \frac{\left|\psi(x) - x \right|}{x} 
    &\leq \frac{m\delta}{2} + \frac{\logb{2\pi} - \frac{1}{2}\logb{1-x^{-2}}}{x} \\
    &\qquad + x^{- \frac{1}{2}}\left(\left(1 + \frac{m\delta}{2}\right) \left(\omega_1 + \sum_{H_1 < |\gamma| \leq T_1} \frac{1}{| \gamma |}\right) + \sum_{|\gamma|>T_1} \frac{R_m(\delta)}{\delta^{m} |\gamma|^{m+1}}\right) .
\end{split}
\end{equation}
We choose 
\begin{equation}\label{eqn:choices}
    T_1 = \frac{1}{\delta} \Bigg(\frac{2 R_m(\delta)}{2 + m\delta}\Bigg)^{\frac{1}{m}}
    \quad\text{and}\quad
    \delta = \frac{\log{x}}{m\pi\sqrt{x}} ,
\end{equation}
because Rosser and Schoenfeld note that this choice of $T_1$ is close to optimum on \cite[p.~262]{RosserSchoenfeld}. Importantly, the choices in \eqref{eqn:choices} facilitate the equalities
\begin{equation*}
    \frac{R_m(\delta)}{\delta^m T_1^{m}} 
    = 1 + \frac{m\delta}{2} 
    \quad\text{and}\quad 
    T_1 = \frac{m\pi\sqrt{x}}{\log{x}} \left(1 + \frac{\log{x}}{2\pi\sqrt{x}} \right)^{-\frac{1}{m}} \left(\left(1+\frac{\log{x}}{m\pi\sqrt{x}}\right)^{m+1} + 1\right).
\end{equation*}
Note that $T_1 > 454\,161\,776 > 10^7$ on $x\geq 10^{19}$ under these choices. Therefore, it follows from \eqref{eqn:StepTwo} and the choices in \eqref{eqn:choices} that 
\begin{equation*}
\begin{split}
    \frac{\left|\psi(x) - x \right|}{x} 
    &\leq \frac{\log{x}}{2\pi\sqrt{x}} + \frac{\logb{2\pi} - \frac{1}{2}\logb{1-x^{-2}}}{x} \\
    &\qquad + x^{- \frac{1}{2}} \left(1 + \frac{\log{x}}{2\pi\sqrt{x}} \right) \left( \omega_1 + \sum_{H_1 < |\gamma| \leq T_1} \frac{1}{| \gamma |} + \sum_{|\gamma|>T_1} \frac{1}{|\gamma|^{m+1}}\right) .
\end{split}
\end{equation*}
Finally, Lemma \ref{lem:Lehman} and \eqref{eqn:Skewes} imply
\begin{equation}\label{eqn:StepFour_Alternative}
\begin{split}
    \frac{\left|\psi(x) - x \right|}{x} 
    &\leq \frac{\log{x}}{2\pi\sqrt{x}} + \frac{\logb{2\pi} - \frac{1}{2}\logb{1-x^{-2}}}{x} \\
    &\qquad + x^{- \frac{1}{2}} \left(1 + \frac{\log{x}}{2\pi\sqrt{x}} \right) \Bigg( 
    \frac{1}{2\pi} \left(\logb{\frac{T_1}{2\pi}}\right)^2 + \frac{\log{T_1}}{\pi T_1^{m}} \\
    &\qquad\hspace{4.2cm}+ \omega_1 + \frac{8\log{H_1} + 4}{H_1} - \frac{1}{2\pi} \left(\logb{\frac{H_1}{2\pi}}\right)^2
    \Bigg) .
\end{split}
\end{equation}
The result follows upon asserting $m = 1$ in \eqref{eqn:StepFour_Alternative}. 
\end{proof}

\begin{proof}[Proof of \eqref{eqn:pnt_psi}]
It follows from Theorem \ref{thm:GoldstonExplicit} that \eqref{eqn:pnt_psi} is established for all $x \geq e^{30\,369.582}$, because
\begin{equation*}
    \frac{1}{2\pi} + \frac{1.465}{\log{x}} - \frac{1.2325}{\log\log{x}} \geq \frac{1}{8\pi}
\end{equation*}
in this range. Next, we prove \eqref{eqn:pnt_psi} for $10^{19} < x< e^{30\,369.582}$. To this end, the approximation for $\psi(x)$ in Lemma \ref{lem:RosserScheonfeldSmoothEFConsequence} implies the desired outcome if and only if
\begin{align*}
    \frac{\log{x} - \log\log{x}}{4}
    &\geq 1 
    + \frac{1}{\log{x}} \left(1 + \frac{\log{x}}{2\pi\sqrt{x}} \right) \left(\logb{\frac{T_0(x)}{2\pi}}\right)^2 \\
    &\quad + \left(\logb{2\pi} - \frac{\logb{1-x^{-2}}}{2}\right) \frac{2\pi}{\sqrt{x}\log{x}} \\
    &\quad + \frac{2\pi \left(1 + \frac{\log{x}}{2\pi\sqrt{x}} \right)}{\log{x}} \Bigg( 
    \frac{\log{T_0(x)}}{\pi T_0(x)} + \omega_1 + \frac{8\log{H_1} + 4}{H_1} - \frac{1}{2\pi} \left(\logb{\frac{H_1}{2\pi}}\right)^2
    \Bigg) 
\end{align*}
in this range of $x$. Since
\begin{equation*}
\begin{split}
    \left(\frac{\logb{2\pi}}{\sqrt{x}} - \frac{\logb{1-x^{-2}}}{2\sqrt{x}}\right) \left(1 + \frac{\log{x}}{2\pi\sqrt{x}} \right)^{-1} &+ \frac{\log{T_0(x)}}{\pi T_0(x)} \\
    &\quad + \omega_1 + \frac{8\log{H_1} + 4}{H_1} < \frac{1}{2\pi} \left(\logb{\frac{H_1}{2\pi}}\right)^2 
\end{split}
\end{equation*}
for every $x \geq 10^{19}$, it suffices to prove
\begin{equation}\label{eqn:suff_cond}
    \log{x} - \log\log{x} - 4
    \geq 4 \left(1 + \frac{\log{x}}{2\pi\sqrt{x}} \right) \left(\frac{\logb{T_0(x)/2\pi}}{\log{x}}\right)^2 \log{x} 
\end{equation}
for every $10^{19} < x< e^{30\,369.582}$. Now, the coefficient of $\log{x}$ on the right-hand side of \eqref{eqn:suff_cond} increases on $x\geq 10^{19}$, so it is straightforward to compute 
\begin{equation*}
    4 \left(1 + \frac{\log{x}}{2\pi\sqrt{x}} \right) \left(\frac{\logb{T_0(x)/2\pi}}{\log{x}}\right)^2 \log{x}
    \leq \begin{cases}
        0.99865 \log{x} &\text{if $e^{9\,768.054} \leq x \leq e^{30\,369.582}$,} \\
        0.99625 \log{x} &\text{if $e^{3\,220.622} \leq x < e^{9\,768.054}$,} \\
        0.99000 \log{x} &\text{if $e^{1\,100.338} \leq x < e^{3\,220.622}$,} \\
        0.97471 \log{x} &\text{if $e^{394.532} \leq x < e^{1\,100.338}$,} \\
        0.94032 \log{x} &\text{if $e^{151.106} \leq x < e^{394.532}$,} \\
        0.87158 \log{x} &\text{if $e^{63.468} \leq x < e^{151.106}$,} \\
        0.75553 \log{x} &\text{if $10^{19} \leq x < e^{63.468}$.}
    \end{cases}
\end{equation*}
It follows from these bounds that the sufficient condition \eqref{eqn:suff_cond} is true for every $$10^{19} \leq x \leq e^{30\,369.582}.$$ Therefore, \eqref{eqn:pnt_psi} is proved for every $x \geq 10^{19}$. Finally, \eqref{eqn:Buthe} and numerical computations confirm the result in the remaining range $101 \leq x \leq 10^{19}$.
\end{proof}

\begin{proof}[Proof of \eqref{eqn:pnt_theta}]
Recall from Costa Pereira \cite[Thm.~5]{Costa} and Broadbent \textit{et al.} \cite[Cor.~5.1]{Broadbent} that for all $x\geq e^{40}$, 
\begin{equation}\label{eqn:diffs}
    0.999x^\frac{1}{2} + x^\frac{1}{3}
    < \psi(x)-\theta(x)
    < \alpha_1 x^\frac{1}{2} + \alpha_2 x^\frac{1}{3},
\end{equation}
with $\alpha_1= 1+ 1.93378 \cdot 10^{-8}$ and $\alpha_2 = 1.04320$. It follows from Theorem \ref{thm:GoldstonExplicit} and \eqref{eqn:diffs} that if the RH is true and $x\geq 10^{19}$, then
\begin{align*}
    \left|\vartheta(x) - x \right| 
    &\leq |\psi(x) - \vartheta(x)| + |\psi(x) - x| \\
    &\leq \sqrt{x}\log{x} \left(\frac{\log{x}}{8\pi} - \left(\frac{1}{2\pi} + \frac{1.465}{\log{x}}\right) \log\log{x} + 1.2325 + \frac{\alpha_1}{\log{x}} + \frac{\alpha_2}{x^\frac{1}{6}\log{x}} \right) . 
\end{align*}
With this, \eqref{eqn:pnt_theta} is established for all $x \geq e^{30\,456.256}$, because
\begin{equation*}
    \frac{1}{2\pi} + \frac{1.465}{\log{x}} - \frac{1.2325}{\log\log{x}} - \frac{\alpha_1}{\log{x}} - \frac{\alpha_2}{x^\frac{1}{6}\log{x}} \geq \frac{1}{8\pi}
\end{equation*}
in this range. Next, the approximation for $\psi(x)$ in Lemma \ref{lem:RosserScheonfeldSmoothEFConsequence} and \eqref{eqn:diffs} imply
\begin{equation*}
\begin{split}
    \frac{\left|\vartheta(x) - x \right|}{x} 
    &\leq \frac{\log{x}}{2\pi\sqrt{x}} + \frac{\logb{2\pi} - \frac{1}{2}\logb{1-x^{-2}}}{x}  + \frac{\alpha_1}{\sqrt{x}} + \frac{\alpha_2}{x^{\frac{2}{3}}} \\
    &\qquad + x^{- \frac{1}{2}} \left(1 + \frac{\log{x}}{2\pi\sqrt{x}} \right) \Bigg( 
    \frac{1}{2\pi} \left(\logb{\frac{T_0(x)}{2\pi}}\right)^2 + \frac{\log{T_0(x)}}{\pi T_0(x)} \\
    &\qquad\hspace{4.5cm}+ \omega_1 + \frac{8\log{H_1} + 4}{H_1} - \frac{1}{2\pi} \left(\logb{\frac{H_1}{2\pi}}\right)^2
    \Bigg) .
\end{split}
\end{equation*}
Arguing similar lines to earlier, this implies the result \eqref{eqn:pnt_theta} is true whenever the sufficient condition \eqref{eqn:suff_cond} is true. We know \eqref{eqn:suff_cond} is true for all $10^{19} \leq x \leq e^{30\,369.582}$ by our earlier analysis, and if $e^{30\,369.582} \leq x \leq e^{30\,456.276}$, then 
\begin{align*}
    4 \left(1 + \frac{\log{x}}{2\pi\sqrt{x}} \right) \left(\frac{\logb{T_0(x)/2\pi}}{\log{x}}\right)^2 \log{x}
    &\leq 0.99865 \log{x} .
\end{align*}
Now, $0.99865 \log{x} \leq \log{x} - \log\log{x} - 4$ for every $e^{30\,369.582} \leq x \leq e^{30\,456.276}$, so \eqref{eqn:pnt_theta} is proved for every $x \geq 10^{19}$. 
Finally, B\"{u}the tells us in \cite[Thm.~2]{Buthe} that if $1\,423 \leq x \leq 10^{19}$, then
\begin{equation}\label{eqn:Buthe2}
    |\vartheta(x) - x| \leq 1.95\sqrt{x} .
\end{equation}
Therefore, \eqref{eqn:Buthe2} and numerical computations confirm the result in the remaining range $2\,657 \leq x \leq 10^{19}$. 
\end{proof}

\subsection{Proof of Corollary \ref{cor:broader}}\label{ssec:broader}

It follows from partial summation that
\begin{align}
    \sum_{p\leq x} \frac{\log{p}}{p}
    &= \log{x} + K_1 + \frac{\vartheta(x) - x}{x} - \int_x^\infty \frac{\vartheta(t) - t}{t^2}\,dt 
    \quad\text{and}\label{eqn:brr_1}\\
    \sum_{p\leq x} \frac{1}{p}
    &= \log\log{x} + K_2 + \frac{\vartheta(x) - x}{x\log{x}} - \int_x^\infty \frac{\log{t} + 1}{t^2(\log{t})^2} (\vartheta(t) - t)\,dt , \label{eqn:brr_2}
\end{align}
in which
\begin{align*}
    K_1 &= 1 - \log{2} + \int_2^\infty \frac{\vartheta(t) - t}{t^2}\,dt
    \quad\text{and}\\
    K_2 &=  \frac{1}{\log{2}} - \log\log{2} + \int_2^\infty \frac{\log{t} + 1}{t^2(\log{t})^2} (\vartheta(t) - t)\,dt .
\end{align*}
Now, $K_1 = E$ and $K_2 = B$, because $K_1$ and $K_2$ are constant and inexplicit versions of Mertens' theorems tell us that
\begin{align*}
    \sum_{p\leq x} \frac{\log{p}}{p}
    = \log{x} + E + o(1) 
    \quad\text{and}\quad
    \sum_{p\leq x} \frac{1}{p}
    = \log\log{x} + B + o(1) 
    \quad\text{as}\quad x\to\infty .
\end{align*}
Using these observations in conjunction with \eqref{eqn:pnt_theta}, we prove \eqref{eqn:moi_1} and \eqref{eqn:moi_2} as follows.

\begin{proof}[Proof of \eqref{eqn:moi_1}]
Apply \eqref{eqn:pnt_theta} in \eqref{eqn:brr_1} to see that if the RH is true and $x \geq 10^{19}$, then
\begin{align*}
    \left| \sum_{p\leq x} \frac{\log{p}}{p} - \log{x} - E \right| 
    &\leq \frac{\log{x} (\log{x} - \log\log{x})}{8\pi\sqrt{x}} + \int_x^\infty \frac{\log{t}(\log{t} - \log\log{t})}{8\pi t^{3/2}}\,dt \\
    &\leq \frac{\log{x} (\log{x} - 3.77847)}{8\pi\sqrt{x}} + \int_x^\infty \frac{\log{t}(\log{t} - 3.77847)}{8\pi t^{3/2}}\,dt \\
    &= \frac{3(\log{x})^2 - 3.33541 \log{x} + 0.88612}{8\pi \sqrt{x}} 
    < \frac{3(\log{x})^2}{8\pi \sqrt{x}} .
\end{align*}
Insert \eqref{eqn:pnt_theta} and \eqref{eqn:Buthe2} into \eqref{eqn:brr_1} to see that if $10^6 \leq x \leq 10^{19}$, then
\begin{align*}
    \left| \sum_{p\leq x} \frac{\log{p}}{p} - \log{x} - E \right| 
    &< 1.95 \left(\frac{1}{\sqrt{x}} + \int_x^{10^{19}} \frac{dt}{t^{3/2}}\right) + \frac{1}{8\pi} \int_{10^{19}}^\infty \frac{(\log{t})^2}{t^{3/2}}\,dt \\
    &\leq 1.95 \left(\frac{3}{\sqrt{x}} - \frac{2}{10^{19/2}}\right) + \frac{10^{19/2}(\log{x})^2}{8\pi(19\log{10})^2 \sqrt{x} } \int_{10^{19}}^\infty \frac{(\log{t})^2}{t^{3/2}}\,dt \\
    &< \frac{3\cdot 1.95\cdot 8\pi + 2.2 (\log{x})^2}{8\pi\sqrt{x}} 
    < \frac{3(\log{x})^2}{8\pi\sqrt{x}} .
\end{align*}
To complete our proof, we have verified \eqref{eqn:moi_1} for $x < 10^6$ using computer checks.
\end{proof}

\begin{proof}[Proof of \eqref{eqn:moi_2}]
Apply \eqref{eqn:pnt_theta} in \eqref{eqn:brr_2} to see that if the RH is true and $x \geq 10^{19}$, then
\begin{align*}
    \left|\sum_{p\leq x} \frac{1}{p} - \log\log{x} - B\right| 
    &\leq \frac{\log{x} - 3.77847}{8\pi\sqrt{x}} + \int_x^\infty \frac{(\log{t} - 3.77847)(\log{t} + 1)}{8\pi t^{3/2}\log{t}}\,dt \\
    &\leq \frac{\log{x} - 3.77847}{8\pi\sqrt{x}} + \left( 1 + \frac{1}{\log{x}} \right) \int_x^\infty \frac{\log{t} - 3.77847}{8\pi t^{3/2}}\,dt \\
    &= \frac{\log{x} - 3.77847}{8\pi\sqrt{x}} 
    + \left( 1 + \frac{1}{\log{x}} \right) \left(\frac{2(\log{x} + 1) - 2\cdot 3.77847}{8\pi\sqrt{x}}\right)
    \\
    &= \frac{3(\log{x} - 3.77847) + 2\left(2 - \frac{2.77847}{\log{x}}\right)}{8\pi\sqrt{x}} < \frac{3\log{x}}{8\pi\sqrt{x}}.
\end{align*}
Insert \eqref{eqn:pnt_theta} and \eqref{eqn:Buthe2} into \eqref{eqn:brr_2} to see that if $10^6 \leq x \leq 10^{19}$, then
\begin{align*}
    \left|\sum_{p\leq x} \frac{1}{p} - \log\log{x} - B\right| 
    &\leq 1.95\left(\frac{1}{\sqrt{x}\log{x}} + \int_x^{10^{19}} \frac{\log{t} + 1}{t^{3/2}(\log{t})^2}\,dt\right) + \frac{1}{8\pi} \int_{10^{19}}^\infty \frac{\log{t} + 1}{t^{3/2}}\,dt \\
    &< 1.95\left(\frac{1}{\sqrt{x}\log{x}} + \frac{1.08}{\log{x}} \int_x^{\infty} \frac{dt}{t^{3/2}}\,dt\right) + \frac{2.14\log{x}}{8\pi\sqrt{x}} \\
    &= \left(\frac{3.16\cdot 1.95\cdot 8\pi}{(\log{x})^2} + 2.14\right) \frac{\log{x}}{8\pi\sqrt{x}} 
    \leq \frac{2.95139\log{x}}{8\pi\sqrt{x}} .
\end{align*}
To complete our proof, we have verified \eqref{eqn:moi_2} for $x < 10^6$ using computer checks.
\end{proof}

Next, Ingham tells us in \cite{Ingham} that we can write
\begin{equation*}
    B = C + \sum_{p}\left(\logb{1-\frac{1}{p}} + \frac{1}{p}\right) .
\end{equation*}
Recall that we have already proved that if the RH is true and $x\geq 10^6$, then
\begin{equation*}
    \left|\sum_{p\leq x} \frac{1}{p} - \log\log{x} - B\right| 
    \leq \mathcal{R}(x) , \quad\text{where}\quad
    \mathcal{R}(x) = 
    \begin{cases}
        \frac{2.95139\log{x}}{8\pi\sqrt{x}} &\text{if $10^6 \leq x < 10^{19}$,}\\
        \frac{3(\log{x} - 3.37784) + 4}{8\pi\sqrt{x}} &\text{if $x \geq 10^{19}$.}
    \end{cases}
\end{equation*}
It follows from this observation that if the RH is true and $x\geq 10^6$, then
\begin{align*}
    B 
    &= C + \sum_{p\leq x}\left(\logb{1-\frac{1}{p}} + \frac{1}{p}\right) + \Theta(x) \\
    &= C + \log\prod_{p\leq x} \left(1-\frac{1}{p}\right) + \log\log{x} + B + \Theta(x) + O^*(\mathcal{R}(x)) ,
\end{align*}
where $O^*$ is big-$O$ notation with implied constant one and
\begin{equation*}
    \Theta(x) = \sum_{p > x} \left(\logb{1-\frac{1}{p}}+\frac{1}{p}\right) .
\end{equation*}
Therefore, we have shown
\begin{align*}
    \left|\log\prod_{p\leq x} \left(1-\frac{1}{p}\right) 
    + C + \log\log{x} + \Theta(x) \right| &\leq \mathcal{R}(x) ,
\end{align*}
and hence we have
\begin{equation*}
    e^{C} \log{x} \prod_{p\leq x} \left(1-\frac{1}{p}\right) 
    = e^{- \Theta(x) + O^*(\mathcal{R}(x))} .
\end{equation*}
Now, Rosser and Schoenfeld proved in \cite[(8.2)]{RosserSchoenfeld1962} that 
\begin{equation*}
    0 > \Theta(x) > - \frac{1.02}{(x-1)\log{x}} = - \Theta_0(x), 
    \quad\text{for}\quad
    x > 1 ,
\end{equation*}
so we can see
\begin{align*}
    e^{C} \log{x} \prod_{p\leq x} \left(1-\frac{1}{p}\right) 
    &> e^{- \mathcal{R}(x)} > 1 - \mathcal{R}(x) 
    \quad\text{and}\\
    e^{C} \log{x} \prod_{p\leq x} \left(1-\frac{1}{p}\right) 
    &< e^{\Theta_0(x) + \mathcal{R}(x)} 
    \leq 1 + \Theta_0(x) + \mathcal{R}(x) + 0.501(\Theta_0(x) + \mathcal{R}(x))^2 .
\end{align*}
Equivalently,
\begin{equation*}
    \left|e^C \log{x} \prod_{p\leq x} \left(1-\frac{1}{p}\right) - 1\right|
    < \Theta_0(x) + \mathcal{R}(x) + 0.501 (\Theta_0(x) + \mathcal{R}(x))^2
\end{equation*}
Similarly, we have
\begin{equation*}
    \frac{e^{-C}}{\log{x}} \prod_{p\leq x} \left(1-\frac{1}{p}\right)^{-1} 
    = e^{\Theta(x) + O^*(\mathcal{R}(x))} ,
\end{equation*}
so we can also see
\begin{equation*}
    \left|\frac{e^{-C}}{\log{x}} \prod_{p\leq x} \left(1-\frac{1}{p}\right)^{-1} - 1\right|
    < \Theta_0(x) + \mathcal{R}(x) + 0.501 (\Theta_0(x) + \mathcal{R}(x))^2 .
\end{equation*}
The result follows from observing that if $10^6 \leq x < 10^{19}$, then 
\begin{align*}
    &\Theta_0(x) + \mathcal{R}(x) + 0.501(\Theta_0(x) + \mathcal{R}(x))^2 \\
    &\qquad\qquad\leq \frac{1.02}{(x-1)\log{x}} + \frac{2.95139\log{x}}{8\pi\sqrt{x}} + 0.501\left(\frac{1.02}{(x-1)\log{x}} + \frac{2.95139\log{x}}{8\pi\sqrt{x}}\right)^2 \\
    &\qquad\qquad< \frac{3\log{x}}{8\pi\sqrt{x}},
\end{align*}
and if $x \geq 10^{19}$, then
\begin{align*}
    &\Theta_0(x) + \mathcal{R}(x) + 0.501(\Theta_0(x) + \mathcal{R}(x))^2 \\
    &\quad\leq \frac{1.02}{(x-1)\log{x}} + \frac{3(\log{x} - 3.37784) + 4}{8\pi\sqrt{x}} \\
    &\qquad\hspace{4cm} + 0.501\left(\frac{1.02}{(x-1)\log{x}} + \frac{3(\log{x} - 3.37784) + 4}{8\pi\sqrt{x}}\right)^2 \\
    &\quad< \frac{3\log{x}}{8\pi\sqrt{x}} .
\end{align*}
It follows that \eqref{eqn:moi_3} and \eqref{eqn:moi_4} are proved for any $x \geq 10^{19}$. To complete the proof of \eqref{eqn:moi_3} and \eqref{eqn:moi_4}, we have verified each result for $x < 10^6$ using computer checks.

\begin{remark}
The constant $0.501$ can be reduced to $0.5 + \varepsilon$ for a very small $\varepsilon > 0$ at large $x$ using a simple Taylor series argument. 
\end{remark}

\bibliographystyle{amsplain}
\bibliography{refs}

\end{document}